\newcommand{\Frob}{\mathfrak{f}}
\newcommand{\GL}{\operatorname{GL}}
\newcommand{\Hom}{\operatorname{Hom}}
\newcommand{\codim}{\operatorname{codim}}
\newcommand{\Aut}{\operatorname{Aut}}
\newcommand{\Evs}{\operatorname{Evs}}
\newcommand{\Ev}{\operatorname{Ev}}
\newcommand{\RPhi}{\operatorname{R\hskip-1pt\Phi}}
\newcommand{\Der}{\operatorname{D}}
\newcommand{\Perv}{\operatorname{Per}}
\newcommand{\Loc}{\operatorname{Loc}}
\newcommand{\Rep}{\operatorname{Rep}}
\newcommand{\rank}{\operatorname{rank}}
\newcommand{\nocontentsline}[3]{}
\newcommand{\tocless}[2]{\bgroup\let\addcontentsline=\nocontentsline#1{#2}\egroup}
\newcommand{\Lgroup}[1]{\prescript{L}{}#1}
\newcommand{\CC}{\mathbb{C}}
\newcommand{\Mat}{\operatorname{Mat}}
\newcommand{\ABV}{{\mbox{\raisebox{1pt}{\scalebox{0.5}{$\mathrm{ABV}$}}}}}
\newcommand{\op}[1]{\operatorname{#1}}
\newcommand{\1}{{\mathbbm{1}}}
\newcommand{\dualgroup}[1]{\widehat{#1}}
\newtheorem{theorem}{Theorem}[section]
\newtheorem{theorem*}{Theorem}
\theoremstyle{definition}
\newtheorem{definition}[theorem]{Definition}
\newtheorem{lemma}[theorem]{Lemma}
\newtheorem{example}[theorem]{Example}
\newtheorem{Remark}[theorem]{Remark}
\tikzset{
  symbol/.style={
    draw=none,
    every to/.append style={
      edge node={node [sloped, allow upside down, auto=false]{$#1$}}}
  }
}
\title[ ]{\small Proof of Vogan's conjecture on Arthur packets: irreducible parameters of $p$-adic general linear groups}
\author{Clifton Cunningham and Mishty Ray}
\date{}
\begin{document}

\begin{abstract}
In this paper we prove Vogan's conjecture on local Arthur packets, as recalled in \cite{CFMMX}*{Section 8.3, Conjecture 1(a)}, for irreducible Arthur parameters of $p$-adic general linear groups.
This result shows that these Arthur packets may be characterized by properties of simple perverse sheaves on a moduli space of Langlands parameters.  
\end{abstract}

\maketitle

\section{Introduction}

The local Langlands correspondence for a connected reductive algebraic group $G$ over a $p$-adic field $F$ partitions the set of equivalence classes of smooth irreducible representations of $G(F)$ into $L$-packets using equivalence classes of Langlands parameters.
Following \cites{Vogan:Langlands, ABV}, the correspondence may be viewed as a bijection between smooth irreducible representations that share a common infinitesimal parameter of a $p$-adic group $G(F)$, along with its pure inner forms, and simple objects in the category of equivariant perverse sheaves on a moduli space of Langlands parameters. 
This perspective leads to the notion of an ABV-packet, as articulated in \cite{CFMMX}*{Section 8}, which conjecturally generalizes the local Arthur packet - we call this as Vogan's conjecture on local Arthur packets. In this paper, we prove this conjecture for irreducible Arthur parameters of the $p$-adic group $GL_n(F)$. 

\subsection{Simple and irreducible parameters}
\label{simple parameters}
Let $F$ be a non-archimedean local field with residue characteristic $p$ and residue field $\mathbb{F}_q$, so $q=p^f$. Let  $W_F$ be the Weil group of $F$ and $W_F'$ denote the group $W_F \times \operatorname{SL}_2(\CC)$. 
Recall that a Langlands parameter is a homomorphism
\[\phi : W'_F \to \Lgroup{G} = \dualgroup{G}\rtimes W_F\]
commuting with the natural projections to $W_F$, satisfying conditions adapted from \cite{Borel:Corvallis}.
In particular, this means that all the information of a Langlands parameter $\phi$ is captured by a $1$-cocycle \[
\phi^0 : W'_F \to \dualgroup{G}
\]
such that $\phi(w,g) = \phi^0(g,w)\rtimes w$.
In the case $G= \GL_n$, the cocycle $\phi^0$ is a representation of the group $W'_F$.

Now set $W_F'':=W_F \times \operatorname{SL}_2(\CC) \times \operatorname{SL}_2(\CC)$ and recall that an Arthur parameter \cite{Arthur:book}*{Chapter 1} is a homomorphism
\[\psi : W''_F \to \Lgroup{G} = \dualgroup{G}\rtimes W_F,\]
likewise defined by a $1$-cocycle 
\[
\psi^0 : W'_F \to \dualgroup{G}
\]
satisfying certain conditions, notably that the restriction of $\psi^0$ to $W_F$ is bounded. 
Again, when $G= \GL_n$, $\psi^0$ is a representation.
We say that $\psi$ is a {\it simple} Arthur parameter of $\GL_n(F)$ if $\psi^0$ is an irreducible representation and the restriction of $\psi^0$ to $W_F$ is trivial. We say that $\psi$ is \textit{irreducible} if $\psi^0$ is an irreducible representation of $W_F''$.

Every Arthur parameter determines a Langlands parameter by the rule
\[\phi_{\psi}(w,x)= \psi\left( w,x,\begin{pmatrix}
|w|^{\frac{1}{2}} & 0\\
0 & |w|^{-\frac{1}{2}}
\end{pmatrix}\right).\]

The infinitesimal parameter $\lambda_{\phi}$ of a Langlands parameter $\phi$ is a homomorphism \[\lambda_{\phi}: W_F \xrightarrow{}\Lgroup{G}\] defined by the rule
\[\lambda_{\phi}(w)= \phi\left( w,\begin{pmatrix}
|w|^{\frac{1}{2}} & 0\\
0 & |w|^{-\frac{1}{2}}
\end{pmatrix}\right).\]
See \cite{CFMMX}*{Section 4.1} for definition. As before, it is determined by
\[\lambda_{\phi}^0 : W_F \to \dualgroup{G}\] which is a representation when $G=\GL_n$. The function $\phi \mapsto \lambda_{\phi}$ defined on $\dualgroup{G}$-equivalence classes of these parameters is not injective in general. The infinitesimal parameters $\lambda_{\phi_{\psi}}$ for a simple Arthur parameter $\psi$ is unramified as a representation of $W_F$, \textit{i.e.}, it are trivial on the inertia group $I_F$.

\subsection{Local Langlands correspondence}
\label{local langlands correspondence}
The local Langlands conjecture for a connected reductive algebraic group $G$ over $F$ gives a map between the set $\Pi(G(F))$ of equivalence classes of smooth irreducible representations of $G(F)$ and the set $\Phi(\Lgroup{G})$ of equivalence classes of Langlands parameters. This map can be refined \cites{Vogan:Langlands, ABV} as a bijection between $\Pi(G(F))$ and the set $\Xi(\Lgroup{G})$ of equivalence classes of pairs $\xi = (\phi,\epsilon)$, called enhanced Langlands parameters, where $\epsilon$ is an irreducible representation of the finite group
\[
S_\phi := Z_{\dualgroup{G}}(\phi)/Z_{\dualgroup{G}}(\phi)^0 Z(\dualgroup{G})^{\Gamma_F},
\]
where $\Gamma_F$ is the absolute Galois group of $F$.
We emphasize the distinction between Langlands parameters and $L$-parameters - Langlands parameters are maps and $L$-parameters are $\widehat{G}$-equivalence classes of Langlands parameters. These $L$-parameters partition $\Pi(G(F))$ into disjoint finite sets $\Pi_{\phi}(G(F))$ called $L$-packets. This correspondence must satisfy a list of functoriality properties and should be compatible with $L$-functions and $\epsilon$-factors on both sides (See \cite{Borel:Corvallis} or \cite{Kal}, for example). 
When $G=\GL_n$, the local Langlands correspondence is a theorem \cites{Hen, Harris-Taylor}. In this case, $S_{\phi}=1$ (see \cite{Arthur:book}, for example) and the correspondence can be viewed as a  bijection 
\[\Pi(G(F)) \xrightarrow{} \Phi(G),\] where $\Phi(G)$ is the set of $L$-parameters for $G$. Thus, $L$-packets are singletons for $\GL_n$.

\subsection{A-packets}
\label{apackets}
In 1989, Arthur introduced \cite{arthur1989unipotent} what are now known as Arthur packets, or $A$-packets, which enjoy some nice properties that non-tempered $L$-packets fail to satisfy. This was a global theory and the local analogue was clarified by Arthur's subsequent work \cites{Arthur:book} in 2013. 
Local $A$-packets $\Pi_\psi(G(F))$ are finite sets of smooth irreducible representations of $G(F)$ which are not necessarily disjoint and consist of unitary admissible representations. The representations that appear in $A$-packets are said to be of Arthur type. 
Every A-packet $\Pi_\psi(G(F))$ contains a distinguished $L$-packet $\Pi_{\phi_\psi}(G(F))$ through the injection injection $\psi \mapsto \phi_\psi$. 
In the 1990s, Adams, Barbasch, and Vogan suggested \cite{ABV} a geometric characterization of $A$-packets by using microlocal analysis on certain stratified complex varieties built out of Langlands parameters. When $G=\GL_n$, $A$-packets are singletons \cite{Arthur:book}; in particular, the $A$-packet for $\psi$ coincides with the $L$-packet for $\phi_{\psi}$.

\subsection{Vogan-Langlands correspondence}
\label{vogan langlands corresp}
Vogan's perspective \cite{Vogan:Langlands} on the local Langlands correspondence suggests a beautiful geometric approach to $A$-packets. This perspective concerns a moduli space $X_{\lambda}$ of Langlands parameters that have the same infinitesimal parameter $\lambda$. Vogan observed that simple objects in the category $\Perv_{\widehat{G}}(X_{\lambda})$ of equivariant perverse sheaves on $X_{\lambda}$ are naturally identified with the pairs $(\phi, \epsilon)$, when we properly interpret the finite group attached to $\phi$ and consider representations of $G(F)$ together with its pure inner forms. Viewed in this way, the local Langlands correspondence takes the form of a bijection 
\[
\Pi_{\lambda}(G/F) \to \Perv_{H_{\lambda}}(V_{\lambda})_{\op{/iso}}^{\op{simple}}
\]
between the set $\Pi_{\lambda}(G/F)$ of equivalence classes of irreducible objects in the category $\Rep_{\lambda}(G/F)$ of smooth representations of $G(F)$ and its pure rational forms which have a matching infinitesimal parameter $\lambda$ and the set $\Perv_{H_{\lambda}}(V_{\lambda})_{\op{/iso}}^{\op{simple}}$ of isomorphism classes of simple objects in the category $\Perv_{\widehat{G}}(X_{\lambda})$.





Instead of working with $\Perv_{\widehat{G}}(X_{\lambda})$, we work with an equivalent category $\Perv_{H_{\lambda}}(V_{\lambda})$ (see \cite{CFMMX}*{Section 4.5}). The variety $V_{\lambda}$ of Langlands parameters and the group $H_{\lambda}$  are described in \ref{modspace}. 
In the case $G=\GL_n$, the correspondence can be viewed as a bijection \[\Pi_{\lambda}(G(F)) \to \Perv_{H_{\lambda}}(V_{\lambda})_{\op{/iso}}^{\op{simple}},\] 
where $\Pi_{\lambda}(G(F))$ is the set of equivalence classes of smooth irreducible representations of $G(F)$ with a matching infinitesimal parameter $\lambda$ and
\[\Perv_{H_{\lambda}}(V_{\lambda})_{\op{/iso}}^{\op{simple}}=\{\mathcal{IC}(\1_C): C \subseteq V_{\lambda} \text{ is an } H_{\lambda}\text{-orbit} \},
\]
where $\1_C$ denotes the constant local system on $C$.
%
In this way, every $\pi \in \Pi_{\lambda}(G(F))$ corresponds to $\mathcal{P}(\pi)=\mathcal{IC}(\1_C) \in \Perv_{H_{\lambda}}(V_{\lambda})_{\op{/iso}}^{\op{simple}}$, for a unique $H_\lambda$-orbit $C\subseteq V_\lambda$. In other words, in this case there is a bijection between elements of $\Pi_{\lambda}(G(F))$ and $H_{\lambda}$-orbits $C$ in $V_{\lambda}$. 
                     
\subsection{Vogan's conjecture on A-packets}
      
Following an approach similar to \cite{ABV}, Vogan attached to any Langlands parameter $\phi$ the set of irreducible representations of $G(F)$ and its pure inner forms for which the characteristic cycles of the corresponding simple perverse sheaf contains the conormal bundle of the $\widehat{G}$-orbit of $\phi$ in the moduli space mentioned above. This notion is revised in \cite{CFMMX} in terms of vanishing cycles. This set is referred to as the ABV-packet $\Pi_\phi^\ABV(G/F)$.

Vogan's perspective on the local Langlands correspondence introduces pure A-packets $\Pi_\psi(G/F)$ which are unions of $A$-packets for $G$ and its pure inner forms. This perspective also identifies $\Pi_\psi(G/F)$  with a finite set of simple perverse sheaves on $V_{\lambda_\psi}$, where $\lambda_\psi$ is the infinitesimal parameter of $\phi_\psi$.
What property do these perverse sheaves share, and how are they determined by the Arthur parameter $\psi$?
This question is answered by the functor
\begin{equation*}
    \label{evs}
    \Evs_{\psi}: \Perv_{H_{\lambda_\psi}}(V_{\lambda_\psi}) \xrightarrow{} \Rep(A_\psi)
\end{equation*}
where $A_\psi = Z_{\dualgroup{G}}(\psi)/Z_{\dualgroup{G}}(\psi)^0$, 
constructed in \cite{CFMMX}*{Section 7.10}. 
In fact, this functor is defined in terms of a more general functor $\Ev_{C_\psi}: \Der_{H_{\lambda_\psi}}(V_{\lambda_\psi}) \to \Der_{H_{\lambda_\psi}}(\Lambda^\text{reg}_{C_\psi})$ 
where $\Lambda_{C_\psi} \subset \Lambda_{\lambda_\psi}$ is the conormal bundle above $C_\psi$ in the conormal variety for $V_\lambda$ \cite{CFMMX}*{Section 7.3}. 
This $\Ev_{C_\psi}$ functor is defined in such a way that
\[
\left(\Ev_{C_\psi}\mathcal{P}\right)_{(x,\xi)}
=
\left(\RPhi_\xi \mathcal{P}\right)_x,
\]
for all $(x,\xi)\in \Lambda^\text{reg}_{C_\psi}$, where $\RPhi_\xi$ is Deligne's vanishing cycles functor for $\xi : V_{\lambda_\psi} \to \CC$ \cite{CFMMX}*{Equation 7.6}.
When restricted from $\Der_{H_{\lambda_\psi}}(V_{\lambda_\psi})$ to $\Perv_{H_{\lambda_\psi}}(V_{\lambda_\psi})$, the functor $\Ev_{C_\psi}$ lands in $\Perv_{H_{\lambda_\psi}}(\Lambda^\text{reg}_{C_\psi})$, which is equivalent to the semisimple category $\Loc_{H_{\lambda_\psi}}(\Lambda^\text{reg}_{C_\psi})$ of equivariant local systems on $\Lambda^\text{reg}_{C_\psi}$, by \cite{CFMMX}*{Proposition 6.9}. 
This proposition also gives an equivalence
\[
\Loc_{H_{\lambda_\psi}}(\Lambda^\text{reg}_{C_\psi}) \cong \Rep(A_\psi);
\]
see \cite{CFMMX}*{Equation 7.30}.
As explained in \cite{CFMMX}*{Equations 7.22, 7.24}, the precise relation between $\Evs_\psi$ and $\Ev_{C_\psi}$ is given by
\begin{equation}\label{equation:EvsEv}
\Evs_\psi \mathcal{P} = \mathcal{H}^{1+\codim C_\psi }_{(x_\psi,\xi_\psi)}\left(\Ev_{C_\psi} \mathcal{P}\right),
\end{equation}
as a vector space with $A_\psi$-action, where $(x_\psi,\xi_\psi)\in \Lambda_{\lambda_\psi}$ is defined in \cite{CFMMX}*{Equation 6.12, 6.13}.
For $G= \GL_n$, $A_\psi =1$ by \cite{Arthur:book}*{Chapter 1}, for example; so in this case, $\Rep(A_\psi)$ is simply the category of vector spaces. 
Following \cite{CFMMX}, the ABV-packet for $\psi$ is defined by
\begin{equation}
    \label{abvpacket}
    \Pi_{\phi_\psi}^{\ABV}(G(F)):=\{\pi \in \Pi_{\lambda_\psi}(G(F)): \operatorname{Evs}_{\psi}(\mathcal{P}(\pi))\neq 0 \},
\end{equation} 
where $\mathcal{P}(\pi)$ is the simple perverse sheaf on $V_{\lambda_\psi}$ corresponding to $\pi\in \Pi_{\lambda_\psi}(G(F))$.

The main result of this paper, below, gives a proof of \cite{CFMMX}*{Section 8.3, Conjecture 1(a)} adapted to general linear groups, for irreducible Arthur parameters:

\begin{theorem*}[Theorem \ref{bow wow main theorem}]
\label{maintheorem}
Let $\psi$ be an irreducible Arthur parameter of $G=\GL_n(F)$, let $\phi_{\psi}$ be its corresponding Langlands parameter, let $\Pi_{\psi}(G(F))$ be the Arthur packet attached to $\psi$. 
Then 
\[ \Pi_{\psi}(G(F))= \Pi_{\phi_{\psi}}^{\ABV}(G(F)).\]
\end{theorem*}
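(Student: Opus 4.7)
Since $\GL_n$ has singleton $A$-packets by \cite{Arthur:book}, the Arthur packet is $\Pi_\psi(G(F)) = \{\pi_\psi\}$, where $\pi_\psi$ is matched with $\phi_\psi$ under the local Langlands correspondence. Writing $C_\psi$ for the $H_{\lambda_\psi}$-orbit of $\phi_\psi$ in $V_{\lambda_\psi}$, the theorem reduces to showing that $\Evs_\psi(\mathcal{P}(\pi)) \neq 0$ precisely when the orbit $C$ attached to $\pi$ equals $C_\psi$. The containment $\pi_\psi \in \Pi_{\phi_\psi}^{\ABV}(G(F))$ follows from the standard microlocal fact that $\Lambda_{C_\psi}$ appears in the characteristic cycle of $\mathcal{IC}(\1_{C_\psi})$ with multiplicity one; combined with \eqref{equation:EvsEv}, this forces $\Evs_\psi(\mathcal{IC}(\1_{C_\psi}))$ to be one-dimensional.

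The reverse containment requires showing $\Evs_\psi(\mathcal{IC}(\1_C)) = 0$ for every orbit $C \neq C_\psi$. The case $C_\psi \not\subseteq \overline{C}$ is automatic, since then the stalk of $\mathcal{IC}(\1_C)$ at any point of $C_\psi$ vanishes, and so do the vanishing cycles at $(x_\psi, \xi_\psi)$. The substantive case, and the core of the proof, is to establish vanishing for those orbits $C$ satisfying $\overline{C} \supsetneq C_\psi$.

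To attack this case, I would begin with an explicit model for $V_{\lambda_\psi}$. For $\psi = \rho \boxtimes \sigma_a \boxtimes \sigma_b$ irreducible, the infinitesimal parameter decomposes as a direct sum of twists $\rho \otimes |\cdot|^k$ with multiplicities given by the Clebsch--Gordan trapezoid of $\sigma_a \otimes \sigma_b$. Hence $H_{\lambda_\psi}$ is a product of general linear groups and $V_{\lambda_\psi}$ is the representation variety of a type-$A$ quiver with that dimension vector; the $H_{\lambda_\psi}$-orbits correspond to multisegments compatible with the weight structure. In these coordinates $C_\psi$ is the orbit of a specific ``ladder'' multisegment, built from $b$ identical length-$a$ segments (twisted by $\rho$). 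The twist by $\rho$ should factor out through an equivalence of equivariant perverse-sheaf categories, reducing the microlocal problem to the case of a simple Arthur parameter.

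The main obstacle is then to show that for every orbit $C$ strictly above $C_\psi$ in the closure order on this quiver variety, the cohomology sheaf $\mathcal{H}^{1+\codim C_\psi}_{(x_\psi, \xi_\psi)}\left(\Ev_{C_\psi} \mathcal{IC}(\1_C)\right)$ vanishes, equivalently that $\Lambda_{C_\psi}$ has multiplicity zero in the characteristic cycle of $\mathcal{IC}(\overline{C})$. I expect this to follow from the rigidity of the ladder configuration in type-$A$ Kazhdan--Lusztig theory: although many orbits sit above $C_\psi$ in the closure poset, their IC sheaves do not acquire $\Lambda_{C_\psi}$ as a characteristic cycle component, precisely because $C_\psi$ arises from an irreducible $\psi$ and so has the maximally symmetric rectangular shape. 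The proof should proceed either via explicit Kazhdan--Lusztig computations for ladder multisegments, via transverse-slice arguments to $C_\psi$ inside each $\overline{C}$ showing the resulting local singularity has trivial vanishing cohomology in the relevant degree, or by induction on a suitable complexity invariant of the quiver. Essentially all of the technical work of the paper will concentrate on this vanishing.
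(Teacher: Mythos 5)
You correctly identify the overall strategy: Arthur packets for $\GL_n$ are singletons, so the theorem reduces to showing $\Evs_\psi(\mathcal{IC}(\1_C))\ne 0$ if and only if $C=C_\psi$; the forward inclusion follows from the nonvanishing of $\Evs_\psi$ on $\mathcal{IC}(\1_{C_\psi})$ (the paper cites \cite{CFMMX}*{Theorem 7.19}); the ``easy'' case $C_\psi\not\leq C$ is handled by stalk vanishing; $C_\psi$ is a ladder (simple) multisegment; and the twist by $\rho$ can be stripped off to reduce to the unramified situation (this is the hyper-unramification step in the paper). Your reduction sketch and the forward inclusion match the paper's.

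The genuine gap is the ``substantive case'' you flag at the end: orbits $C$ with $\overline{C}\supsetneq C_\psi$. You reduce this to showing $\Lambda_{C_\psi}$ has multiplicity zero in the characteristic cycle of $\mathcal{IC}(\1_C)$ and propose Kazhdan--Lusztig computations, transverse-slice analysis, or an induction, but you do not carry any of these out, and none of them is what the paper does. The paper's key idea is a duality trick that avoids any direct microlocal or Kazhdan--Lusztig computation: by \cite{KS_sing}*{Proposition 3.2.1}, nonvanishing of $\Evs_\psi\mathcal{IC}(\1_C)$ gives not just $C_\psi\leq C$ (the stalk condition you noted) but also, after passing to the dual Vogan variety, $\widehat{C}_\psi\leq\widehat{C}$ where $\widehat{\cdot}$ is the Zelevinsky/Pyasetskii involution on orbits (equivalently, $\Evs_{\widehat\psi}\mathcal{IC}(\1_{\widehat C})\ne 0$, using the compatibility $\widehat{C}_\psi=C_{\widehat\psi}$). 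These two order constraints, together with the elementary combinatorial Lemma~\ref{mainlemma} --- a simple (ladder) multisegment $\alpha$ satisfies: if $\alpha\leq\beta$ and $\widetilde\alpha\leq\widetilde\beta$ then $\alpha=\beta$, proved by comparing $L_\alpha$, $n_\alpha$, and their duals via the M\oe glin--Waldspurger algorithm --- force $C=C_\psi$. Without this duality input and the combinatorial lemma, your argument is incomplete; and the ``rigidity of ladders'' you invoke is exactly what Lemma~\ref{mainlemma} makes precise, but by a much more elementary route than characteristic-cycle multiplicities.

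Two smaller points. First, the paper's reduction from irreducible to simple parameters is not merely a categorical equivalence factoring out $\rho$: it proceeds through the hyper-unramification of \cite{CFMMX}*{Section 5.3}, computing $J_\lambda\cong\GL_{n/m}(\CC)$ via Schur's lemma and checking that $V_\lambda$, $H_\lambda$, the orbit order, and the involution are all preserved (Lemmas~\ref{hyperbolic-elliptic}, \ref{j/h lambda proof}, \ref{unramification lemma}); you should verify these identifications rather than asserting the twist ``should factor out.'' Second, for the forward inclusion, the paper cites \cite{CFMMX}*{Theorem 7.19} directly rather than a multiplicity-one statement for the conormal bundle in the characteristic cycle; your version is plausible but is doing more than needed.
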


We briefly overview the structure of the paper. Henceforth, $G=\GL_n$. In Section \ref{modspace}, we write down the \textit{Vogan variety} $V_{\lambda}$, the group $H_{\lambda}$, and the explicit action of $H_{\lambda}$ on $V_{\lambda}$. Our perspective on the local Langlands correspondence is that of representations corresponding to orbits in the Vogan variety. Both sides of this correspondence are encoded in combinatorial objects - multisegments for representations and rank triangles for orbits. We reiterate these notions and write down an algorithm to go from multisegments to rank triangles and vice-verse in Section \ref{multi-ranktriangle}. In Section \ref{zel-inv}, we discuss the Zelevinsky involution on multisegments and explain how it corresponds to an involution on orbits. We write down the Moeglin-Waldspurger algorithm to compute the involution on a multisegment, and use this notion to prove Lemma \ref{mainlemma}. In section \ref{proof}, we adapt Lemma \ref{mainlemma} to a statement about orbits in Lemma \ref{orbitduallemma}. Using this lemma and the properties of the functor $\op{Evs_\psi}$, we prove the conjecture for simple $A$-parameters in Theorem \ref{ wow main theorem}. In section \ref{irreducible a parameter section}, we use the process of hyper-unramification to generalize the result to an irreducible $A$-parameter. 

In upcoming work, we expect to generalize Theorem \ref{maintheorem} to arbitrary $A$-parameters for $\GL_n(F)$ for a non-archimedean field $F$.

Since ABV-packets are expected to generalize Arthur packets, it is natural to ask - are all ABV-packets are singletons for $\GL_n$? The answer is no; one can see an ABV-packet of size $2$ for a non Arthur-type Langlands parameter of $\GL_{16}$ in \cite{KS_sing}.

\subsection*{Acknowledgements} 
The authors would like to acknowledge Andrew Fiori for sharing his insights on the combinatorics of multisegments, which was crucial for Lemma \ref{mainlemma}. The second named author would like to extend her gratitude to Kristaps Balodis and James Steele for helpful discussions. 


\section{A moduli space of Langlands parameters}
\label{modspace}
Recall that $F$ is a non-archimedean local field of residue characteristic $q=p^f$ with $p$-adic absolute value denoted by $|\cdot|$ and the uniformizer denoted by $\varpi$ . The Weil group is $W_F=I_F \rtimes \mathbb{Z}$, where $I_F$ is the inertia subgroup and $\mathbb{Z}$ is identified with the cyclic subgroup generated by the frobenius. From local class field theory, we have the local Artin map $\alpha: W_F \xrightarrow{} F^{\times}$ that maps $I_F$ to the group of units $\mathfrak{o}^{\times}$ in $F^{\times}$ and the geometric frobenius $\Frob$ to $1/\varpi$. We abuse notation and use $|\cdot|$ to denote the absolute value on $W_F$, where $|w| := |\alpha(w)|$. Thus, $|\Frob|=|1/\varpi|=q$. We have that $G=\GL_n(F)$, and we can work with $\widehat{G}=\GL_n(\mathbb{C})$ instead of $\Lgroup{G}$. 

We consider $\textit{simple}$ Arthur parameters, {\it i.e.}, of the form 
\begin{align}
\label{arthurpar}
\psi: W_F'' &\longrightarrow \operatorname{GL}_n(\mathbb{C}) \notag\\
(w,x,y) &\mapsto \operatorname{Sym}^d(x)\otimes \operatorname{Sym}^a(y),
\end{align}
where $a$ and $d$ are positive integers \footnotemark. 
\footnotetext{Here we use $d$ and $a$ in our notation to refer to the fact that the first $\operatorname{SL}_2$ in $W_F''=W_F\times \operatorname{SL}_2(\mathbb{C}) \times \operatorname{SL}_2(\mathbb{C})$ is the ``Deligne-$\operatorname{SL}_2$'', {\it i.e.}, it comes from Weil-Deligne representations, and the second $\operatorname{SL}_2$ is the ``Arthur-$\operatorname{SL}_2$'', {\it i.e.}, it comes from the definition of Arthur parameters.} 
For $w \in W_F$, let $d_w$ denote the matrix 
$\begin{pmatrix}
|w|^{\frac{1}{2}} & 0\\
0 & |w|^{-\frac{1}{2}}
\end{pmatrix}$. The corresponding Langlands parameter is given by
\begin{align}
\label{langlandspar}
\phi_{\psi}: W_F' &\longrightarrow \operatorname{GL}_n(\mathbb{C}) \notag\\
(w,x) &\mapsto \psi(w,x,d_w).
\end{align}
One can simplify $\psi(w,x,d_w)$ to get 
\[\phi_{\psi}(w,x)= |w|^{\frac{a}{2}}\operatorname{Sym}^{d}(x) \oplus |w|^{\frac{a-2}{2}}\operatorname{Sym}^{d}(x) \cdots \oplus |w|^{\frac{-a}{2}}\operatorname{Sym}^{d}(x). \] 
The infinitesimal parameter associated to this Langlands parameter is given by
\begin{align}
\label{infpar}
\lambda_{\phi_{\psi}}: W_F &\longrightarrow \operatorname{GL}_n(\mathbb{C})\notag\\
(w,x) &\mapsto \phi_{\psi}(w,d_w).
\end{align}
One can simplify $\phi_{\psi}(w,d_w)$ to get 
\begin{equation}\label{lambda eigenvalues}
\lambda_{\phi_{\psi}}(w)= \bigoplus_{i=0}^{a}\bigoplus_{j=0}^{d} |w|^{\frac{a+d}{2}-(i+j)} 
\end{equation}
We fix this infinitesimal parameter for the rest of this work and call it $\lambda$ for simplicity. The element $\lambda(\Frob) \in \GL_{n}(\mathbb{C})$ is semisimple with eigenvalues $\lambda_i := q^{e_i}$ occurring with multiplicities $m_i$, where $0\leq i \leq k$, with $e_i=e_{i-1}-1$. Let us rewrite the matrix $\lambda(\Frob)$ in a more instructive way:
%

\begin{equation}
    \label{eq:inf}
    \lambda(\Frob) = \begin{bmatrix} 
    q^{e_0}I_{m_0} & 0 & \dots \\
    0 & q^{e_1}I_{m_1} & \dots\\
    \vdots & \ddots & \\
    0 &        & q^{e_k}I_{m_k} 
    \end{bmatrix}.
\end{equation} 
We denote the corresponding eigenspaces by $E_i$. If we consider the $n$-dimensional vector space which is the standard representation of $\operatorname{Lie}(\widehat{G})$ spanned by the usual basis $\{f_1,...,f_n\}$, then $E_0$ is spanned by $\{f_1,...,f_{m_0}\}$, $E_1$ by $\{f_{m_0+1},...,f_{m_1}\}$, and in general $E_i$ by $\{f_{m_{i-1}+1},...,f_{m_i}\}$. 

Following \cite{Vogan:Langlands} and \cite{CFMMX}, we describe the variety $V_{\lambda}$ and group $H_{\lambda}$ below: 
\begin{equation}
\label{vlambdadef}
    V_{\lambda} :=\{X \in \mathfrak{gl}_n(\CC): \operatorname{Ad}(\lambda(w))X=|w|X, \text{ }\forall w \in W_F\}.
\end{equation}
As the absolute value on $I_F$ is trivial, any $X \in V_{\lambda}$ is determined by the equation
\begin{equation}
\label{vlambdaequation}\operatorname{Ad}(\lambda(\Frob))X=qX.\end{equation}
$V_{\lambda}$ is referred to as the \textit{Vogan variety}. In this case, it is easy to see the structure of this variety. Any matrix in $V_{\lambda}$ is strictly upper triangular or trace zero so it can be identified as a vector subspace of the $\mathfrak{sl}_n$. $\lambda(\Frob)$ is an element inside the maximal torus in $\operatorname{SL}_n$. Using the Cartan decomposition of $\mathfrak{sl_n}$, $V_{\lambda}$ can be identified as a sum of root spaces of $\mathfrak{sl_n}$ such that the roots evaluate to $q$ at $\lambda(\Frob)$. This shows that $V_{\lambda}$ is simply an affine space. This fact is true more generally, as seen in \cite{CFMMX}*{Lemma 5.5}.

\begin{lemma}
The Vogan variety $V_{\lambda}$ can be decomposed as
\begin{equation}
\label{vlambda}   
V_{\lambda} = \Hom(E_{k},E_{k-1}) \times \Hom(E_{k-1},E_{k-2})\times \cdots \times \Hom(E_1,E_0). 
\end{equation}

\end{lemma}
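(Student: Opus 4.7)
The plan is to compute the adjoint action of $\lambda(\Frob)$ blockwise with respect to the eigenspace decomposition and read off which blocks are constrained to vanish by the defining equation \eqref{vlambdaequation} of $V_\lambda$.

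First I would decompose the standard representation as $\CC^n = E_0 \oplus E_1 \oplus \cdots \oplus E_k$ as in the discussion before the lemma, and write any $X \in \mathfrak{gl}_n(\CC)$ in block form $X = (X_{ij})_{0 \le i,j \le k}$ with $X_{ij} \in \Hom(E_j, E_i)$. Since $\lambda(\Frob)$ acts on $E_i$ by the scalar $q^{e_i}$ according to \eqref{eq:inf}, the adjoint action is
\[
\bigl(\operatorname{Ad}(\lambda(\Frob)) X\bigr)_{ij} \;=\; \lambda(\Frob)\, X\, \lambda(\Frob)^{-1}\Big|_{E_j \to E_i} \;=\; q^{e_i - e_j} X_{ij}.
\]
By \eqref{vlambdaequation}, membership in $V_\lambda$ amounts to $q^{e_i - e_j} X_{ij} = q X_{ij}$ for every pair $(i,j)$, i.e.\ either $X_{ij} = 0$ or $e_i - e_j = 1$. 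Since the eigenvalues satisfy $e_i = e_{i-1} - 1$, the condition $e_i - e_j = 1$ is equivalent to $j = i+1$.

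Consequently, the only blocks of $X$ that may be nonzero are $X_{i-1,i} \in \Hom(E_i, E_{i-1})$ for $i = 1, \ldots, k$, and conversely any choice of such blocks (with all other entries zero) yields a matrix satisfying \eqref{vlambdaequation}. The assignment $X \mapsto (X_{0,1}, X_{1,2}, \ldots, X_{k-1,k})$ is clearly linear and bijective, yielding the identification
\[
V_{\lambda} \;=\; \Hom(E_k, E_{k-1}) \times \Hom(E_{k-1}, E_{k-2}) \times \cdots \times \Hom(E_1, E_0),
\]
as asserted. There is no real obstacle here: the only thing to check with care is the sign convention in $e_i = e_{i-1} - 1$, so that one correctly concludes the surviving blocks sit on the superdiagonal of the block decomposition (which matches the observation in the surrounding text that elements of $V_\lambda$ are strictly upper triangular).
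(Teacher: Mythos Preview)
Your proof is correct and follows essentially the same approach as the paper: both arguments exploit the eigenspace decomposition $\CC^n=\bigoplus E_i$ and the fact that $\operatorname{Ad}(\lambda(\Frob))$ scales the $(i,j)$-block by $q^{e_i-e_j}$. The paper splits the argument into two containments (first showing $X(E_i)\subseteq E_{i-1}$ via an eigenvector computation, then checking the reverse inclusion entrywise), whereas you carry out the block computation in a single pass; this is a cosmetic difference rather than a genuinely different route.
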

\begin{proof}
Let $X \in V_{\lambda}$ and $v \in E_i$. By definition, $\lambda(\Frob)v=q^{e_i}v$ and $\lambda(\Frob)X\lambda(\Frob)^{-1}=qX$. We show that $Xv \in E_{i-1}$. Indeed
\begin{align*}
    \lambda(\Frob) Xv &= \lambda(\Frob)X\lambda(\Frob)^{-1}\cdot \lambda(\Frob)v,\\
                     &= qX\cdot q^{e_i}v,\\
                     &= q^{e_i +1}Xv.
\end{align*}
As $E_{i-1}$ is the eigenspace for $q^{e_i + 1}$,
\[V_{\lambda} \subseteq \Hom(E_{k},E_{k-1}) \times \Hom(E_{k-1},E_{k-2})\times \cdots \times \Hom(E_1,E_0).\]
Now we show the reverse containment. Let $T \in \Hom(E_i,E_{i-1})$. If we view this as an endomorphism of $\mathbb{C}^n$, then the $m_{i-1} \times m_i$ matrix associated with $T$ can be thought of as an $n \times n$ matrix $(a_{ij})$ such that $a_{ij} \neq 0$ if $m_{i-2}+1\leq i\leq m_{i-1} \text{ and }m_{i-1}+1\leq j\leq m_i$, and $a_{ij} = 0$ otherwise. However, the adjoint (conjugation) action of $\lambda(\Frob)$ simply multiplies all the nonzero entries $a_{ij}$ by $q$. Thus, $\operatorname{Ad}(\lambda(\Frob))\cdot (a_{ij}) = q \cdot (a_{ij})$ and the matrix $(a_{ij})$ can be identified with an element of $V_{\lambda}$. This shows the reverse containment.  
\end{proof}

The group $H_{\lambda}$ is $Z_{\widehat{G}}(\lambda)$, the centralizer of the image of $\lambda$ in $\widehat{G}$. Again, any $g \in H_{\lambda}$ is determined by the equation
\begin{equation}
\label{hlambdaequation}
     \operatorname{Ad}(\lambda(\Frob))\cdot g=g.
\end{equation}
Using the description of $\lambda(\mathfrak{f})$ in $\eqref{eq:inf}$, we can directly see that 


\begin{equation*}
    Z_{\widehat{G}}(\lambda(\Frob)) = \begin{bmatrix} 
    \GL_{m_0}(\mathbb{C}) & 0 & \dots \\
    0 & \GL_{m_1}(\mathbb{C}) & \dots\\
    \vdots & \ddots & \\
    0 &        & \GL_{m_k}(\mathbb{C}) 
    \end{bmatrix}
\end{equation*} 
We may identify $H_{\lambda}$ as follows.
\begin{equation}
    \label{hlambda}
    H_{\lambda} = \Aut(E_k) \times \Aut(E_{k-1}) \times \cdots \times \Aut(E_0)
\end{equation}
$H_{\lambda}$ acts on $V_{\lambda}$ via conjugation. Using the interpretation from $\eqref{vlambda}$ and $\eqref{hlambda}$, we describe the action as follows.
For elements in $V_{\lambda}$ let us use the notation
\[
x = (x_{kk-1},\cdots,x_{21}, x_{10}), \text{ where } x_{ii-1} \in \Mat_{m_{i-1}, m_i}(\mathbb{C}).
\]
For elements of $H_{\lambda}$, we write
\[
h = (h_k,\cdots h_1,h_0),  \text{ where } h_i\in \GL_{m_i}(\mathbb{C}).
\]
In this notation, the action $H_{\lambda}$ on  $V_{\lambda}$ is given by
\begin{equation}
    \label{action}
    (h_k, \ldots , h_1,h_0)  \cdot  (x_{kk-1}, \ldots, x_{21}, x_{10})=(h_{k-1} x_{kk-1} h_k^{-1}, \ldots ,h_1 x_{21} h_2^{-1}, h_0 x_{10} h_1^{-1} ).
\end{equation}
Observe that 
$
\text{rank}(h_{i-1}x_{ii-1}h_i^{-1})=\text{rank}(x_{ii-1}),
$ thus any orbit preserves the ranks of $x_{ii-1}$s. More generally every orbit is determined by the ranks of $x_{ii-1}$s and all their permitted products. Realizing elements of $V_{\lambda}$ as a matrix as in \eqref{vlambda}, we have 
\begin{equation*}
    x = \begin{bmatrix} 
    0 & x_{10} & 0      & \dots \\
    0 & 0      & x_{21} & \dots\\
    \vdots &  &\ddots & \\
     &     &   & x_{kk-1}\\
    0 & 0   & \dots & 0
    \end{bmatrix}.
\end{equation*} 
The orbit of any $x$ is determined by its Jordan canonical form. Notice that $0$ is the only eigenvalue, so calculating dimensions of the generalized eigenspaces will involve computing ranks of powers of this matrix $x$, which in turn involves computing ranks of all permitted products \begin{equation}
\label{product}
   x_{ij}:=x_{ii-1}x_{i-1i-2}\cdots x_{j+1j} 
\end{equation} 
where where $0\leq j < i-1\leq k$. 
Writing down these rank equations exactly realizes an orbit as an algebraic variety. 

With emphasis on the fact that $\lambda$ determines $H_{\lambda}$ and $V_{\lambda}$, we drop the subscript and use $H$ and $V$ instead.
\begin{example}
\label{h and v}
For $\GL_4$, set $\psi(w,x,y)=\op{Sym}^1(x)\otimes\op{ Sym}^1(y)$. Then $\phi_{\psi}=\psi(w,x,d_w)=|w|^{1/2}x\oplus |w|^{-1/2}x$ and $\lambda(w)=\psi(w,d_w,d_w)$, so
\begin{equation*}
    \lambda(\Frob) = \begin{bmatrix} 
    q & 0 & 0 & 0 \\
    0 & 1 & 0 & 0\\
    0 & 0 & 1 &  0\\
    0 & 0 & 0 &  q^{-1}\\
    \end{bmatrix}.
\end{equation*} 
By direct calculation using \eqref{vlambdaequation} and \eqref{hlambdaequation}
\begin{align*}
    V &=\left\{\begin{bmatrix} 
    0 & u_1 & u_2 & 0 \\
    0 & 0 & 0 & v_1\\
    0 & 0 & 0 & v_2\\
    0 & 0 & 0 & 0\\
    \end{bmatrix} : u_1,u_2,v_1,v_2 \in \CC \right\} \simeq \mathbb{A}^4_{\CC},\\
    H &=\left\{\begin{bmatrix} 
    t_1 & 0 & 0 & 0 \\
    0 & t_2 & r & 0\\
    0 & s & t_3 &  0\\
    0 & 0 & 0 & t_4\\
    \end{bmatrix} : t_1,t_4 \in \CC^{\times},\begin{bmatrix} t_2 & r\\
    s & t_3\end{bmatrix} \in \GL_2(\CC) \right\} \simeq \GL_1(\CC) \times \GL_2(\CC) \times \GL_1(\CC). \\
\end{align*}
The eigenvalues of $\lambda(\Frob)$ are $q, 1, q^{-1}$  with multiplicities $1,2,$ and $1$ and eigenspaces \\$E_0=\op{span}\{(1,0,0,0)\}$, $E_1=\op{span}\{(0,1,0,0),(0,0,1,0)\}$ and $E_2=\op{span}\{(0,0,0,1)\}$, respectively. In the formulation of \eqref{vlambda} and \eqref{hlambda}, we may write
\begin{align*}
    V &= \op{Hom}(E_2,E_1) \times \op{Hom}(E_1,E_0),\\
    H &= \op{Aut}(E_2) \times \op{Aut}(E_1) \times \op{Aut}(E_0).
\end{align*}
Following \eqref{action}, the action of $H$ on $V$ is given by
\[\left(t_4,\begin{bmatrix}t_2&r\\s&t_3\end{bmatrix}, t_1\right)\cdot \left(\begin{bmatrix}v_1\\v_2\end{bmatrix}, [u_1,u_2]\right)=\left(\begin{bmatrix}t_2&r\\s&t_3\end{bmatrix}\begin{bmatrix}v_1\\v_2\end{bmatrix}t^{-1}_4, t_1[u_1,u_2]\begin{bmatrix}t_2&r\\s&t_3\end{bmatrix}^{-1} \right).\]
\end{example}


\section{From multisegements to rank triangles and vice-versa}
\label{multi-ranktriangle}
As outlined in Section \ref{vogan langlands corresp}, there is a bijection between equivalence classes of smooth irreducible representations that share the infinitesimal parameter $\lambda$ and $H$-orbits in $V$. We know that smooth irreducible representations of $G$ can be parameterized using multisegments \cite{Z2}. The relationship between multisegments and orbits appears in \cite{zelevinskii1981p}. In this section, we introduce the combinatorial gadget of rank triangles to parametrize the orbits and write down an algorithm compute the multisegment corresponding to a rank triangle and vice-versa. This has been worked out for some specific cases in $\GL_{16}$ in \cite{KS_sing}; here we present it with updated notation for $\GL_n$. 

\subsection{Multisegments}
The Langlands classification using multisegments is summarized beautifully in \cite{kudla1994local}*{Section 1}. We recall this classification in more detail following the notation in \cite{kudla1994local}. For any representation $\pi$ of $\GL_m(F)$, let $\pi(i):=|\text{det}(\cdot)|^i\pi$. For a partition $n=\underbrace{m+m+\ldots+m}_{r\text{-times}}$ and a supercuspidal representation $\sigma$ of $\GL_m(F)$, we call \begin{equation}
    \label{segment}
    (\sigma, \sigma(1), \ldots, \sigma(r-1))=[\sigma,\sigma(r-1)]=\Delta
\end{equation} a segment. This segment determines a representation on a parabolic subgroup of $G$ whose levi is $\underbrace{\GL_m(F) \times \GL_m(F) \times \cdots \times \GL_m(F)}_{r\text{-times}}$. More precisely, we have the representation $\sigma \otimes \sigma(1) \otimes \cdots \otimes \sigma(r-1)$ on the levi, inflated to a representation of the parabolic, which is trivial on the unipotent. We can then carry out parabolic induction to obtain the induced representation $I_P^{G}(\Delta)$ of $G$ which has a unique irreducible quotient denoted by $Q(\Delta)$. 

A multisegment is a collection of segments with repetitions allowed. The Langlands classification theorem tells us that any smooth irreducible representation of $G$ occurs as a unique irreducible quotient inside a parabolically induced representation determined by a multisegment $\{\Delta_1, \Delta_2, \ldots, \Delta_r\}$ subject to certain conditions - we denote that quotient by $Q(\Delta_1, \Delta_2, \ldots, \Delta_r)$. Next, for integers $i<j$ we introduce the notation 
\begin{equation}
    \label{oursegment}
    [i,j]:=(|\cdot|^i, |\cdot|^{i+1}, \ldots, |\cdot|^j)
\end{equation}
 for a segment which is the special case of \eqref{segment} when we consider the partition $1+1+ \cdots +1$ and $\sigma$ to be the character $|\cdot|^i$ of $F^{\times}$. This notation may be extended to half integers $i<j$ as long as $j-i+1$ is a positive integer (this is the length of the segment). A segment of length 1 of the form $\{|\cdot|^i\}$ is just denoted $[i]$. There is a partial order on multisegments which we recall below.
\begin{definition}
\label{multiorder}
Let $\alpha$ and $\beta$ be any two multisegments. We say that $\alpha \leq \beta$ if we can form $\beta$ by performing elementary operations on segments in $\alpha$. More precisely, $\alpha \leq \beta$ if we can form $\beta$ by replacing any two segments $\Delta_1$ and $\Delta_2$ in $\alpha$ with
\[ \begin{cases} 
      \Delta_1 \cup \Delta_2 \text{ and } \Delta_1 \cap \Delta_2 & \text{ if }\Delta_1 \cap \Delta_2 \neq \emptyset,\\
      \Delta_1 \cup \Delta_2 & \text{ if } \Delta_1 \cup \Delta_2 = \emptyset \text{ and } \Delta_1 \cup \Delta_2 \text{ is a segment},\\
      \Delta_1 \text{ and } \Delta_2& \text{ otherwise. }
   \end{cases}
\]
\begin{example}
 In the case of $\GL_4$, the Steinberg representation can be written as $Q([-\frac{3}{2},\frac{3}{2}])$ and the trivial representation is given by $Q([\frac{3}{2}], [\frac{1}{2}], [-\frac{1}{2}], [-\frac{3}{2}])$. The multisegment for $\phi_{\psi}$ from Example \ref{h and v} is \{[0,1],[-1,0]\}. 
 \end{example}
\end{definition}

\subsection{Rank triangles}
We know from Section \ref{modspace} that any orbit of \\ $(x_{kk-1},\ldots, x_{21}, x_{10}) \in V$ is determined by the ranks of $x_{ij}$s as defined in \eqref{product}. We arrange these ranks in a rank triangle. This arrangement helps us write down a visual algorithm to compute the correspondence between multisegments and rank triangles in Section \ref{ranktri-multiseg}. To that end we set
\[r_{ij}:=\op{rank}(x_{ij}),\]
where $0\leq j<i\leq k$. 
Recall that $E_i$ is an eigenspace for $\lambda(\Frob)$ with eigenvalue $q^{e_i}$ with multiplicity $m_i$. We arrange these exponents, multiplicities, and ranks into a triangle to reflect the corresponding combinations of the $x_{ij}$ and refer to this as a {\it rank triangle}:
\begin{center}
\begin{tikzpicture}
\node(A) at (0,0.5) {$e_k$};
\node(B) at (2,0.5) {$e_{k-1}$};
\node(C) at (4,0.5) {$\cdots$};
\node(L) at (6, 0.5) {$e_1$};
\node(M) at (8,0.5) {$e_0$};
\draw (-0.25,0.25) -- (8.25,0.25);
\node(A) at (0,0) {$m_k$};
\node(B) at (2,0) {$m_{k-1}$};
\node(C) at (4,0) {$\cdots$};
\node(L) at (6,0) {$m_1$};
\node(M) at (8,0) {$m_0$};
\draw (-0.25,-0.25) -- (8.25,-0.25);
\node(D) at (1,-1) {$r_{kk-1}$};
\node(E) at (3,-1) {$\hspace{0.3 cm} r_{(k-1)(k-2)}\cdots $};
\node(d) at (5,-1) {$r_{21}$};
\node(e) at (7,-1) {$r_{10}$};
\node(F) at (2,-2) {$r_{k(k-2)}$};
\node(G) at (4,-2) {$\cdots$};
\node(H) at (6,-2) {$r_{20}$};
\node(I) at (3,-3) {$\cdots$};
\node(J) at (5.5,-3) {$\cdots$};
\node(K) at (4.25,-4) {$r_{k0}$};.
\end{tikzpicture}
\end{center}
From left to right, the values in the top row of the rank triangle correspond, respectively, to the exponents $e_i$ of the eigenvalues $q^{e_i}$. The values in the second row correspond respectively to the multiplicities $m_i$ of the eigenvalues $q^{e_i}$. 
The row below these multiplicities shows ranks $r_{ij}$ of $x_{ij}$, which are subject to the condition that every rank is less than or equal to the two eigenvalue multiplicities above it. For the other rows, the ranks are subject to exactly two conditions: 

\begin{itemize}
    \item Every rank is less than or equal to the two ranks above it. This is a consequence of a basic fact from linear algebra: if $A$ and $B$ are matrices such that $AB$ and $BC$ are defined then $\rank(AB) \leq \rank(A)$ and $\rank(AB) \leq \rank(B)$. 
    \item The ranks satisfy 
     \[
r_{is} + r_{lj} \leq r_{ij} + r_{ls}, \qquad j \leq s < l \leq i.
\]This condition is a consequence of the Frobenius inequality from linear algebra: if $A,B$ and $C$ are matrices such that $AB$, $BC$ and $ABC$ are defined then $\rank(AB) + \rank(BC) \leq \rank(ABC) + \rank(B)$.
\end{itemize}

 The set of $H$-orbits in $V$ is naturally in bijection with rank triangles subject to these conditions. 

The set of $H$-orbits in $V$ carries a partial order defined by the Zariski topology: 

\begin{definition}
\label{orbitorder}
There is a partial order on the set of orbits in  $V_{\lambda}$: For any two orbits $C$ and $C'$, $C\leq C'$ if and only if $C \subseteq \overline{C'}$. 

\end{definition}
This partial order can be read easily from the corresponding rank triangles as follows:
If the ranks for $C$ are $r_{ij}$ and $C'$ are $r'_{ij}$ then $C\leq C'$ if and only if $r_{ij}\leq r'_{ij}$ for all $i,j$.

\begin{example} \label{rank triangle gl4}
 For $\lambda$ in Example \ref{h and v}, the template for the rank triangles will look as follows.
 \begin{center}
\begin{tikzpicture}
\node(A) at (0,0.5) {$e_2$};
\node(B) at (1,0.5) {$e_1$};
\node(C) at (2,0.5) {$e_0$};
\draw (-0.25,0.25) -- (2.25,0.25);
\node(A) at (0,0) {$m_2$};
\node(B) at (1,0) {$m_1$};
\node(C) at (2,0) {$m_0$};
\draw (-0.25,-0.25) -- (2.25,-0.25);
\node(D) at (0.5,-0.5) {$r_{21}$};
\node(E) at (1.5,-0.5) {$r_{10}$};
\node(d) at (1,-1) {$r_{20}$};
\end{tikzpicture}
\end{center}
 The orbit corresponding to the case $r_{21}=r_{10}=1$ and $r_{20}=0$ will have the following rank triangle.
 \begin{center}
\begin{tikzpicture}
\node(A) at (0,0.5) {$-1$};
\node(B) at (1,0.5) {$0$};
\node(C) at (2,0.5) {$1$};
\draw (-0.25,0.25) -- (2.25,0.25);
\node(A) at (0,0) {$1$};
\node(B) at (1,0) {$2$};
\node(C) at (2,0) {$1$};
\draw (-0.25,-0.25) -- (2.25,-0.25);
\node(D) at (0.5,-0.5) {$1$};
\node(E) at (1.5,-0.5) {$1$};
\node(d) at (1,-1) {$0$};
\end{tikzpicture}
\end{center}
 
\end{example}

\subsection{Correspondence between rank triangles and multisegments}
\label{ranktri-multiseg}
We write down an algorithm to compute a rank triangle from a multisegment and vice-versa.

To each segment $\Delta = [e_{j},e_i]$ we associate the rank triangle $T_{\Delta}$ with
 \[ r_{ls} = \begin{cases} 1, & j\leq s<l\leq i; \\ 0, & \text{otherwise.} \end{cases} \qquad m_s = \begin{cases} 1, & j\leq s\leq i; \\ 0, & \text{otherwise}. \end{cases} \]
A multisegment $\underline{m}$ then determines the triangle 
 \[ T_{\underline{m}} = \sum_{\Delta\in \underline{m}} T_{\Delta}. \]
 
We now explain how to pass from a rank triangle with eigenvalues  $q^{e_i}$ and multiplicities (top row) $m_i$ to a multisegment with support $\underline{m}_\lambda$ by the following inductive rule:. 
 \begin{enumerate}
 \item Set $\underline{m}:=\emptyset$.
  \item Let $r_{ij}$ be the lowest and most to the left non-zero entry in the rank triangle. Note that $j< i$.
  \item Add the segment $\Delta = [e_i,e_j]$ to $\underline{m}$. 
  \item Replace $r_{k l}$ by $r_{k l}-1$ for each value of $k,l$ with $j\leq l< k\leq i$.
  Replace $m_k$ by $m_k-1$ for each $j\leq k\leq i$. That is, subtract $T_\Delta$ from the rank triangle.
  \item Repeat steps (ii) through (iv) until all $r_{ij}$ are zero.
  \item If any $m_k\ne 0$, add the singleton $\{ e_k\}$ with multiplicity $m_k$ to $\underline{m}$. 
  \end{enumerate}
 Now $\underline{m}$ is the multisegment determined by the rank triangle. These procedures, $\underline{m}\leftrightarrow  T_{\underline{m}}$ establish a bijection between rank triangles and multisegments.
 
 In Definition \ref{multiorder} we define a parital order on multisegments and in Definition \ref{orbitorder} we have a partial order on orbits (and therefore rank triangles). From \cite{zelevinskii1981p} (Theorem 2.2, Contiguity theorem) these partial orders are compatible under the correspondence. More precisely, let $\underline{m}, T_{\underline{m}}$, and $C_{\underline{m}}$ be a multisegment, its corresponding rank triangle and orbit respectively. Then the above statement can be rephrased as: If $\alpha$ and $\beta$ are multisegments, then $\alpha \leq \beta$ if and only if $C_{\alpha} \leq C_{\beta}$ if and only if $T_{\alpha} \leq T_{\beta}$. 

\begin{example} \label{multisegment-rk tri example}
Let us compute the multisegment for the rank triangle from Example \ref{rank triangle gl4}. 
 \begin{center}
\begin{tikzpicture}
\node(A) at (0,0.5) {$-1$};
\node(B) at (1,0.5) {$0$};
\node(C) at (2,0.5) {$1$};
\draw (-0.25,0.25) -- (2.25,0.25);
\node(A) at (0,0) {$1$};
\node(B) at (1,0) {$2$};
\node(C) at (2,0) {$1$};
\draw (-0.25,-0.25) -- (2.25,-0.25);
\node(D) at (0.5,-0.5) {$1$};
\node(E) at (1.5,-0.5) {$1$};
\node(d) at (1,-1) {$0$};
{\color{red}
\draw (0.5, -0.8) -- (-0.5,0.25);
\draw (0.5,-0.8) -- (1.5, 0.25);
\draw ((-0.5,0.25)-- (-0.5, 0.6);
\draw ((1.5,0.25)-- (1.5, 0.6);
\draw [-{Implies}, double] (1,-1.5) -- (1,-1.8);
}
\node(f) at (1,-2.5){$[-1,0]$};

\draw [-latex] (3,-0.25) -- (4,-0.25);
 
\node(A) at (5,0.5) {$-1$};
\node(B) at (6,0.5) {$0$};
\node(C) at (7,0.5) {$1$};
\draw (4.75,0.25) -- (7.25,0.25);
\node(A) at (5,0) {$0$};
\node(B) at (6,0) {$1$};
\node(C) at (7,0) {$1$};
\draw (4.75,-0.25) -- (7.25,-0.25);
\node(D) at (5.5,-0.5) {$0$};
\node(E) at (6.5,-0.5) {$1$};
\node(d) at (6,-1) {$0$};
{\color{red}
\draw (6.5, -0.8) -- (5.5,0.25);
\draw (6.5,-0.8) -- (7.5, 0.25);
\draw ((5.5,0.25)-- (5.5, 0.6);
\draw ((7.5,0.25)-- (7.5, 0.6);
\draw [-{Implies}, double] (6,-1.5) -- (6,-1.8);
}
\node(f) at (6,-2.5){$[0,1]$};

\draw [-latex] (8,-0.25) -- (9,-0.25);

\node(A) at (10,0.5) {$-1$};
\node(B) at (11,0.5) {$0$};
\node(C) at (12,0.5) {$1$};
\draw (9.75,0.25) -- (12.25,0.25);
\node(A) at (10,0) {$0$};
\node(B) at (11,0) {$0$};
\node(C) at (12,0) {$0$};
\draw (9.75,-0.25) -- (12.25,-0.25);
\node(D) at (10.5,-0.5) {$0$};
\node(E) at (11.5,-0.5) {$0$};
\node(d) at (11,-1) {$0$};
{\color{red}
\draw [-{Implies}, double] (11,-1.5) -- (11,-1.8);
}
\node(f) at (11,-2.5){Terminate};
\end{tikzpicture}
\end{center}
The resulting multisegment is $\{[0,1], [-1,0]\}$. This orbit corresponds to the representation $Q([0,1],[-1,0])$. In this way, we can compute the multisegments for all orbits in the Vogan variety $V$ from in Example \ref{h and v}. 

\begin{center}
\begin{tabular}{ |c|c| } 
 \hline
$\Pi_{\lambda}(\GL_4(F))$
 & Rank triangle \\
 \hline

 $Q([-1,1],[0])$ &   
\begin{tikzpicture}
\node(A) at (0,0.5) {$-1$};
\node(B) at (0.5,0.5) {$0$};
\node(C) at (1,0.5) {$1$};
\draw (-0.12,0.33) -- (1.12,0.33);
\node(A) at (0,0.15) {$1$};
\node(B) at (0.5,0.15) {$2$};
\node(C) at (1,0.15) {$1$};
\draw (-0.12,0) -- (1.12,0);
\node(D) at (0.25,-0.2) {$1$};
\node(E) at (0.75,-0.2) {$1$};
\node(d) at (0.5,-0.5) {$1$}; \end{tikzpicture}\\

\hline
$Q([0,1],[-1,0])$ &   
\begin{tikzpicture}
\node(A) at (0,0.5) {$-1$};
\node(B) at (0.5,0.5) {$0$};
\node(C) at (1,0.5) {$1$};
\draw (-0.12,0.33) -- (1.12,0.33);
\node(A) at (0,0.15) {$1$};
\node(B) at (0.5,0.15) {$2$};
\node(C) at (1,0.15) {$1$};
\draw (-0.12,0) -- (1.12,0);
\node(D) at (0.25,-0.2) {$1$};
\node(E) at (0.75,-0.2) {$1$};
\node(d) at (0.5,-0.5) {$0$}; \end{tikzpicture}\\
\hline
$Q([1], [0], [-1,0])$ &   
\begin{tikzpicture}
\node(A) at (0,0.5) {$-1$};
\node(B) at (0.5,0.5) {$0$};
\node(C) at (1,0.5) {$1$};
\draw (-0.12,0.33) -- (1.12,0.33);
\node(A) at (0,0.15) {$1$};
\node(B) at (0.5,0.15) {$2$};
\node(C) at (1,0.15) {$1$};
\draw (-0.12,0) -- (1.12,0);
\node(D) at (0.25,-0.2) {$1$};
\node(E) at (0.75,-0.2) {$0$};
\node(d) at (0.5,-0.5) {$0$}; \end{tikzpicture}\\
\hline

$Q([0,1],[0],[-1])$ &   
\begin{tikzpicture}
\node(A) at (0,0.5) {$-1$};
\node(B) at (0.5,0.5) {$0$};
\node(C) at (1,0.5) {$1$};
\draw (-0.12,0.33) -- (1.12,0.33);
\node(A) at (0,0.15) {$1$};
\node(B) at (0.5,0.15) {$2$};
\node(C) at (1,0.15) {$1$};
\draw (-0.12,0) -- (1.12,0);
\node(D) at (0.25,-0.2) {$0$};
\node(E) at (0.75,-0.2) {$1$};
\node(d) at (0.5,-0.5) {$0$}; \end{tikzpicture}\\
\hline
$Q([1],[0],[0],[-1])$ &   
\begin{tikzpicture}
\node(A) at (0,0.5) {$-1$};
\node(B) at (0.5,0.5) {$0$};
\node(C) at (1,0.5) {$1$};
\draw (-0.12,0.33) -- (1.12,0.33);
\node(A) at (0,0.15) {$1$};
\node(B) at (0.5,0.15) {$2$};
\node(C) at (1,0.15) {$1$};
\draw (-0.12,0) -- (1.12,0);
\node(D) at (0.25,-0.2) {$0$};
\node(E) at (0.75,-0.2) {$0$};
\node(d) at (0.5,-0.5) {$0$}; \end{tikzpicture}\\
\hline
\end{tabular}
\end{center}
\vspace{2mm}
The open orbit in $V$ consists of elements of full rank. From the table above, we see that the open orbit corresponds to the representation $Q([-1,1],[0])$. On the other hand, having directly computed the multisegment \{[-1,0],[0,1]\} for $\phi_{\psi}$ from Example \ref{h and v}, we now have its rank triangle.

\end{example}

\section{Zelevinsky involution on multisegments and orbits}
\label{zel-inv}
We now work with an involution on orbits, often referred to as the Zelevinsky involution, which interchanges the Steinberg and trivial representations. This appears in \cite{Z2}, \cite{zelevinskii1981p}, and is expanded upon in \cite{MW:involution}. We first write down an algorithm to compute the involution $\alpha \mapsto \Tilde{\alpha}$ following \cite{MW:involution} and then prove an important combinatorial lemma (Lemma \ref{mainlemma}). Finally, we see the analogous notions on orbits.

Let $\alpha$ denote a multisegment. This means that $\alpha$ is a collection of segments of the form $[b,e]$, where $b$ and $e$ are integers or half integers and are referred to as the base value and end values of the segment respectively. 

\subsection{Moeglin-Waldspurger algorithm}
\label{mw_alg}
In this section, write down the Moeglin-Waldpurger algorithm \cite{MW:involution} to compute the Zelevinsky involution on multisegments for $\GL_n$. For this we recall what it means for one segment to precede the other. Let $\Delta_1 = [b_1,e_1]$ and $\Delta_2=[b_2,e_2]$ be two segments.  


\begin{definition}
$\Delta_1$ is said to \textit{precede} $\Delta_2$ if and only if \[b_1 < b_2 \text{, } e_1<e_2, \text{ and } b_2 \leq e_1 + 1. \]
\end{definition}

We list the steps to compute $\Tilde{\alpha}$ below. 

First, we compute the segment $M(\alpha)$ associated to $\alpha$:
\begin{enumerate}
    \item Set $e$ to be the largest end value that appears in any segment. Set $m:=e$.  
    \item Consider all segments in $\alpha$ with end value $m$. Among these, choose a segment with the largest base value and call it $\Delta_m$. 
    \item Consider the set of all segments in $\alpha$ that precede $\Delta_m$ with end value $m-1$. If this is empty, go to step $(5)$. Otherwise, choose a segment from this set with the largest base value and call it $\Delta_{m-1}$. 
    \item Set $m:=m-1$ and go to step (3). 
    \item Return $M(\alpha):=[m,e]$.
\end{enumerate}

Next, we inherit the following notation from the above procedure: for any $i \in \{m,m+1,\cdots,e\}$, set $\Delta_i$ as the segment whose base value is the highest among segments with end value $i$. Define $\alpha \setminus M(\alpha)$ to be the multisegment obtained by removing $i$ from $\Delta_i$ for all $m\leq i \leq e$. Then, the dual $\Tilde{\alpha}$ is recursively defined via \[\Tilde{\alpha}:= (M(\alpha), \widetilde{\alpha\setminus M(\alpha)}).\] 
The fact that this procedure gives you an involution comes from \cite{MW:involution}. In particular, we have the useful property that $\Tilde{\Tilde{\alpha}}=\alpha$. 

\begin{example} \label{multisegment inv exam}
Let us continue working in the setting of Example \ref{h and v} and compute the involution on multisegments that appear:
\begin{center}
    
\begin{tabular}{|c|c|}
    \hline
    $\alpha$ &  $\Tilde{\alpha}$\\
    \hline 
    $\{[-1,1], [0]\}$ & $\{[-1],[0],[0],[1]\}$ \\
    \hline
    $\{[-1,0], [0,1]\}$ & $\{[-1,0], [0,1]\}$ \\
    \hline
    $\{[1],[0],[-1,0]\}$ & $\{[0,1],[0],[-1]\}$\\
    \hline
\end{tabular}
\end{center}

\end{example}

\subsection{Lemma about simple multisegments} The aim of this section is to prove Lemma \ref{mainlemma}. We establish some terminology and notation associated to a multisegment $\alpha$. The length of a segment $[b,e]$ is simply the value $e-b+1$. 

\noindent $L_{\alpha}$:= Length of the longest segment in $\alpha$. \\
$n_{\alpha}$:= number of segments in $\alpha$ \\
$c_{\alpha}$:= minimum number of segments that $\bigcup_{\Delta \in \alpha}\Delta$ can be broken into. 

\begin{lemma}
\label{lemma1}
Let $\alpha$ and $\beta$ be any multisegments. If $\alpha \leq \beta$, then
\begin{enumerate}
    \item $L_{\alpha} \leq L_{\beta}$.
    \item $n_{\alpha} \geq n_{\beta} .$
    \item $n_{\Tilde{\alpha}} \geq L_{\alpha}$ and $n_{\alpha} \geq L_{\Tilde{\alpha}}$. 
\end{enumerate}
\end{lemma}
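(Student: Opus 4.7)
I would prove the three inequalities in turn, using the elementary-operation description of the partial order for (1) and (2) and the Moeglin--Waldspurger algorithm of Section~\ref{mw_alg} for (3).

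For (1) and (2), by transitivity of $\le$ it suffices to treat a single elementary step from Definition~\ref{multiorder}, which replaces two segments $\Delta_1, \Delta_2 \in \alpha$ either by $\{\Delta_1 \cup \Delta_2,\, \Delta_1 \cap \Delta_2\}$ (linked case) or by $\{\Delta_1 \cup \Delta_2\}$ (adjacent case). The linked case preserves the number of segments and the adjacent case decreases it by one, so $n_\alpha \ge n_\beta$ in either case, proving (2). For (1), the union $\Delta_1 \cup \Delta_2$ has length at least $\max(|\Delta_1|, |\Delta_2|)$ while the intersection, when present, is strictly shorter than either original, so the longest-segment length cannot decrease, giving $L_\alpha \le L_\beta$.

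For (3), the two inequalities are equivalent via $\tilde{\tilde\alpha} = \alpha$, so it suffices to prove $n_{\tilde\alpha} \ge L_\alpha$. Fix a longest segment $\Delta_0 = [b, e] \in \alpha$ and consider the iteration $\alpha^{(i+1)} = \alpha^{(i)} \setminus M(\alpha^{(i)})$ of the MW algorithm, which terminates after $n_{\tilde\alpha}$ steps when $\alpha^{(n_{\tilde\alpha})} = \emptyset$. The key observation is that in each iteration the algorithm selects at most one segment $\Delta_j$ at each end value $j$ in the range $[m, e]$ and removes the single element $j$ from each selected segment; in particular, any fixed segment in $\alpha^{(i)}$ loses at most one element per step. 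Tracking $\Delta_0$ and its progressively shortened descendants, the current length decreases by at most one per iteration, and the algorithm cannot terminate while any descendant of $\Delta_0$ remains present. This forces at least $L_\alpha$ iterations and yields $n_{\tilde\alpha} \ge L_\alpha$.

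\textbf{Main obstacle.} The only delicate point is (3): making the notion of ``descendant of $\Delta_0$'' rigorous across iterations, since the MW rule selects at each end value the segment with \emph{largest} base value, which need not be the direct descendant of $\Delta_0$ when several segments share that end value. I expect to address this by an induction on the total size of $\alpha$, or alternatively by tracking a coarser invariant such as the multiset of base values of $\alpha^{(i)}$, showing that a well-defined chain of descendants of $\Delta_0$ can always be constructed with successive terms at most one element shorter than the last.
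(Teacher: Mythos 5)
Your proposal is correct and follows essentially the same route as the paper: parts (1) and (2) reduce to a single elementary operation, and part (3) rests on the observation that each Moeglin--Waldspurger iteration removes at most one element (the end value) from any fixed segment. The ``main obstacle'' you flag for (3) is not a real difficulty: you need not track a specific descendant of $\Delta_0$ at all. It suffices to track the invariant $\Phi(\alpha^{(i)}) := \max\{\,\text{length of } \Delta : \Delta \in \alpha^{(i)}\,\}$; since each iteration removes at most one end value from any one segment, $\Phi(\alpha^{(i+1)}) \ge \Phi(\alpha^{(i)}) - 1$, and $\Phi = 0$ only on the empty multisegment, so at least $L_\alpha = \Phi(\alpha^{(0)})$ iterations --- hence at least $L_\alpha$ segments in $\tilde\alpha$ --- are required. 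This is the same content as the paper's one-line observation that each element of a fixed segment lands in a distinct segment of $\tilde\alpha$, just phrased as an iteration count rather than a pigeonhole statement.
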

\begin{proof} 
\begin{enumerate}
   \item Since $\beta$ is obtained by performing elementary operations on segments in $\alpha$, the longest segment in $\beta$ is either a segment from $\alpha$ or a union of two segments in $\alpha$. Therefore, $L_{\beta} \geq L_{\alpha}.$
    \item Every pair of segments in $\alpha$ is replaced by at most two segments to form $\beta$. Therefore $n_{\beta} \leq n_{\alpha}$.
    \item Fix a segment in $\alpha$. From the Moeglin-Waldspurger algorithm, one can observe that each element of that segment appears in a distinct segment in $\Tilde{\alpha}$. In particular, this is true for the longest segment of $\alpha$. Thus, we get $n_{\Tilde{\alpha}} \geq L_{\alpha}$. Since $\Tilde{\Tilde{\alpha}}=\alpha$, we get the other inequality. 
\end{enumerate}
\end{proof}
\begin{definition}
$\alpha$ is a simple multisegment if it is of the form 
\[\{[b,e],[b+1,e+1],\cdots,[b+n-1,e+n-1]\}.\]
\end{definition}
\noindent Observe that $n$ in the definition above is equal to $n_{\alpha}$. 
\begin{lemma}
\label{lemma2}
If $\alpha$ is a simple multisegment, then
\begin{enumerate}
    \item $\Tilde{\alpha}$ is simple.
    \item $n_{\Tilde{\alpha}}=L_{\alpha}$.
    \item $c_{\alpha}=1$.
\end{enumerate}

\end{lemma}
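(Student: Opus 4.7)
The plan is to prove all three parts by a single induction on $L_\alpha$, the common length of the segments in the simple multisegment $\alpha = \{[b+i,\,b+L-1+i] : 0\leq i\leq n-1\}$ (where I write $L := L_\alpha$ and $n := n_\alpha$). Part (3) is essentially immediate: the support $\bigcup_{\Delta\in\alpha}\Delta$ is the contiguous interval $\{b, b+1, \ldots, b+L+n-2\}$, which is itself a single segment, so $c_\alpha = 1$. The heart of the argument is the inductive computation of $M(\alpha)$ via the Moeglin--Waldspurger algorithm of Section \ref{mw_alg}.

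The base case $L=1$ is direct: then $\alpha = \{[b],[b+1],\ldots,[b+n-1]\}$, and the MW chain is $\Delta_{b+n-1} = [b+n-1],\ \Delta_{b+n-2}=[b+n-2],\ \ldots,\ \Delta_b = [b]$ (each preceding the previous, since singleton $[c]$ precedes $[c+1]$ trivially). Hence $M(\alpha) = [b, b+n-1]$, $\alpha\setminus M(\alpha) = \emptyset$, and $\tilde\alpha = \{[b,b+n-1]\}$ is a single segment—trivially simple, with $n_{\tilde\alpha} = 1 = L_\alpha$. For the inductive step with $L\geq 2$, I first identify $M(\alpha)$. The unique segment of $\alpha$ with largest end value $b+L-1+(n-1)$ is $\Delta_{n-1}$; the unique one with end value $b+L-1+i$ is $\Delta_i$; and $\Delta_i$ precedes $\Delta_{i+1}$ because $b+i < b+i+1$, $b+L-1+i < b+L+i$, and $b+i+1 \leq (b+L-1+i)+1$ (the last using $L\geq 1$). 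Thus the chain built by the algorithm is $\Delta_{n-1},\Delta_{n-2},\ldots,\Delta_0$, terminating with $m = b+L-1$, and so $M(\alpha) = [b+L-1,\,b+L+n-2]$, a segment of length $n$.

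Subtracting one element from the end of each $\Delta_i$ yields $\alpha\setminus M(\alpha) = \{[b+i,\,b+L-2+i] : 0\leq i\leq n-1\}$, which is again simple, with $n$ segments of length $L-1$. By induction, $\widetilde{\alpha\setminus M(\alpha)}$ is simple with $L-1$ segments of length $n$; tracking the base and end values through the induction, it equals $\{[b+j,\,b+j+n-1] : 0\leq j\leq L-2\}$. Prepending $M(\alpha) = [b+L-1,\,b+L+n-2]$ then gives
\[
\tilde\alpha = \{[b+j,\,b+j+n-1] : 0\leq j\leq L-1\},
\]
which is simple with $L$ segments of length $n$. This proves (1) and yields $n_{\tilde\alpha} = L = L_\alpha$, giving (2). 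The only non-routine step is verifying the closed form for $M(\alpha)$ via the precedence relation; once that is in hand, everything else reduces to bookkeeping and a straightforward induction, and in particular the formula for $\tilde\alpha$ recovers the expected duality (swapping the roles of $n$ and $L$) consistent with $\tilde{\tilde\alpha} = \alpha$.
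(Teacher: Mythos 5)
Your proof is correct and takes essentially the same approach as the paper: both run the Moeglin--Waldspurger algorithm on the simple multisegment to derive the closed form $\tilde\alpha = \{[b+j,\,b+j+n-1] : 0\leq j\leq L_\alpha - 1\}$, from which (1) and (2) follow immediately, and both observe (3) directly from the contiguity of the support. The only difference is presentational: the paper states the formula for $\tilde\alpha$ as an immediate consequence of the algorithm, whereas you carefully justify it by induction on $L_\alpha$, spelling out the computation of $M(\alpha)$ and the recursion.
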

\begin{proof}
\begin{enumerate}
    \item Applying the Moeglin-Waldspurger algorithm, one can explicitly compute 
    \[\Tilde{\alpha}=\{[e-k,e+n_{\alpha}-k-1] : 0\leq k \leq e-b\}.\] Clearly, it is simple. 
     
    \item Observe that the length of every segment in $\alpha$ is the same, and $L_{\alpha}=e-b+1$. Observing how the index $k$ varies in part $(1)$ tells us that $n_{\alpha}=e-b+1$.
    \item $c_{\alpha}=1$ because
    \[\bigcup_{\Delta \in \alpha}\Delta = [b,e+n-1].\]
    
\end{enumerate}
\end{proof}
\begin{lemma}
\label{lemma3}
If $L_{\Tilde{\alpha}}=n_{\alpha}$, $\alpha \leq \beta$, and $\Tilde{\alpha} \leq \Tilde{\beta}$, then 
\[n_{\alpha}=n_{\beta}=L_{\Tilde{\alpha}}=L_{\Tilde{\beta}}.\]
The above statement also holds true if $\alpha$ is interchanged with $\Tilde{\alpha}$ and $\beta$ with $\Tilde{\beta}$. 
\end{lemma}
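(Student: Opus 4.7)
The plan is to chain together the inequalities supplied by Lemma \ref{lemma1}. Everything we need is already in our hands; the content of the statement is that under the extra hypothesis $L_{\Tilde{\alpha}}=n_\alpha$, the string of inequalities has nowhere to expand and must collapse to equalities.

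First I would gather the inputs. From $\alpha \leq \beta$, part (2) of Lemma \ref{lemma1} gives $n_\alpha \geq n_\beta$. From $\Tilde{\alpha} \leq \Tilde{\beta}$, part (1) of Lemma \ref{lemma1} gives $L_{\Tilde{\alpha}} \leq L_{\Tilde{\beta}}$. Applied to $\beta$, part (3) of Lemma \ref{lemma1} gives $n_\beta \geq L_{\Tilde{\beta}}$. Combining these three facts with the hypothesis $L_{\Tilde{\alpha}} = n_\alpha$, I assemble the chain
\[
n_\alpha \;=\; L_{\Tilde{\alpha}} \;\leq\; L_{\Tilde{\beta}} \;\leq\; n_\beta \;\leq\; n_\alpha,
\]
which forces every inequality in sight to be an equality. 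This yields the desired conclusion $n_\alpha = n_\beta = L_{\Tilde{\alpha}} = L_{\Tilde{\beta}}$.

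For the symmetric assertion (with $\alpha \leftrightarrow \Tilde{\alpha}$ and $\beta \leftrightarrow \Tilde{\beta}$), I would simply rerun the same argument after noting that $\Tilde{\Tilde{\alpha}} = \alpha$ and $\Tilde{\Tilde{\beta}} = \beta$, so that the roles of the multisegment and its dual are interchangeable as hypotheses. Concretely, assuming now $L_\alpha = n_{\Tilde{\alpha}}$, the analogous chain
\[
n_{\Tilde{\alpha}} \;=\; L_\alpha \;\leq\; L_\beta \;\leq\; n_{\Tilde{\beta}} \;\leq\; n_{\Tilde{\alpha}}
\]
gives $n_{\Tilde{\alpha}} = n_{\Tilde{\beta}} = L_\alpha = L_\beta$.

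There is no real obstacle here; the only care required is bookkeeping of which part of Lemma \ref{lemma1} applies to which pair, and making sure the hypothesis $L_{\Tilde{\alpha}} = n_\alpha$ is used at exactly the spot where the chain closes up. The lemma is essentially a clean consequence of the three monotonicity/duality estimates already established.
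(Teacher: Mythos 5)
Your proof is correct and follows essentially the same argument as the paper: you assemble the identical cycle of inequalities $n_\alpha = L_{\Tilde{\alpha}} \leq L_{\Tilde{\beta}} \leq n_\beta \leq n_\alpha$ using parts (1), (3), and (2) of Lemma \ref{lemma1} together with the hypothesis, and force all inequalities to collapse. The handling of the symmetric case via $\Tilde{\Tilde{\alpha}}=\alpha$ also matches the intent of the paper's remark.
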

\begin{proof} By hypothesis, $\Tilde{\alpha} \leq \Tilde{\beta}$. Therefore,
    
\begin{align*}
    L_{\Tilde{\alpha}} &\leq L_{\Tilde{\beta}}, \text{ (from Lemma \ref{lemma1})}\\
    &\leq n_{\Tilde{\Tilde{\beta}}}, \text{ (from Lemma \ref{lemma1}) }\\
    &= n_{\beta}, \text{ (since $\Tilde{\Tilde{\beta}}=\beta$)} \\
    &\leq n_{\alpha}, \text{ (hypothesis and Lemma \ref{lemma1})}\\
    &=L_{\Tilde{\alpha}}. \text{ (hypothesis)}
\end{align*}

\end{proof}

\begin{lemma}
\label{lemma4}
Let $\alpha$ be a simple multisegment and $\beta$ any multisegment such that $\alpha \leq \beta$ and $L_{\alpha}\leq L_{\beta}$. Then, $\alpha=\beta$.
\end{lemma}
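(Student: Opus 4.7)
The plan is to establish a rigidity property of simple multisegments under the partial order: any nontrivial elementary operation applied to two segments of $\alpha$ strictly enlarges the maximum segment length. Coupled with the monotonicity of $L$ along chains of elementary operations, this will pin $\beta$ down to $\alpha$ once the lengths are forced to coincide.

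First I would write $\alpha = \{\Delta_0, \dots, \Delta_{n-1}\}$ with $\Delta_i = [b+i, b+i+L-1]$, where $L := L_\alpha$ and $n := n_\alpha$. A direct case analysis for $i < j$ then shows $\Delta_i \cap \Delta_j = [b+j, b+i+L-1]$ when $j \leq i+L-1$ and is empty otherwise, while $\Delta_i \cup \Delta_j$ is a segment exactly when $j \leq i+L$, in which case it equals $[b+i, b+j+L-1]$, of length $L + (j-i) \geq L+1$. Consequently every nontrivial elementary operation performed on a pair of segments of $\alpha$ produces at least one segment of length strictly greater than $L$.

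Next I would combine this with the general bound $|\Delta \cup \Delta'| \geq \max(|\Delta|, |\Delta'|)$, which shows the maximum segment length is nondecreasing along any sequence of elementary operations; moreover, since such operations only combine segments and never split them, once a segment of length greater than $L$ appears it cannot subsequently disappear (it can only be absorbed into something at least as long). This yields a dichotomy: along any chain $\alpha = \gamma_0 \to \gamma_1 \to \cdots \to \gamma_k = \beta$ realizing $\alpha \leq \beta$, either every step is trivial, giving $\beta = \alpha$, or at least one step is nontrivial and $L_\beta > L_\alpha$.

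Finally, I would read off the conclusion from the hypotheses. By Lemma~\ref{lemma1}(1), the assumption $\alpha \leq \beta$ already forces $L_\alpha \leq L_\beta$; combined with the stated length condition (which carries nonredundant content only when read as the matching upper bound $L_\beta \leq L_\alpha$), this produces $L_\alpha = L_\beta$, ruling out the second branch of the dichotomy and forcing $\beta = \alpha$. The main obstacle, and the only subtle point, is verifying the cumulative monotonicity asserted in the middle paragraph: once the rigid structure of $\alpha$ is broken by a first nontrivial step, no subsequent elementary operation can restore the maximum length to $L$. This is ultimately a consequence of the combine-only character of the operations defining $\leq$.
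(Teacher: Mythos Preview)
Your proposal is correct and follows essentially the same approach as the paper: both argue that if $\alpha < \beta$ strictly, then some nontrivial elementary operation on a pair of segments of the simple multisegment $\alpha$ must occur, producing a segment of length strictly greater than $L_\alpha$, whence $L_\beta > L_\alpha$ and a contradiction. You are more careful than the paper in tracking this through a chain of elementary operations (the paper asserts directly that $\beta$ contains a segment of the form $\Delta_1\cup\Delta_2$ with $\Delta_i\in\alpha$, which is only literally true after the first step), and you also correctly flag that the stated hypothesis $L_\alpha\leq L_\beta$ is redundant and must be read as $L_\beta\leq L_\alpha$ for the argument---and the paper's own contradiction---to go through.
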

\begin{proof}
Suppose $\alpha < \beta$. Then, there is a segment in $\beta$ of the form $\Delta_1 \cup \Delta_2$ strictly containing $\Delta_1$ and $\Delta_2$, where $\Delta_1, \Delta_2 \in \alpha$. Now $\alpha$ is simple, so every segment has the same length and that length is equal to $L_{\alpha}$. Thus, the strict containment above implies
\[L_{\beta} \geq \text{ length of } \Delta_1 \cup \Delta_2 > \text{ length of } \Delta_1 = L_{\alpha}, \]
which contradicts the hypothesis.
\end{proof}
Now we are ready to state the primary result of this section. 
\begin{lemma}
\label{mainlemma}
Suppose $\alpha$ is a simple multisegment and $\beta$ is any multisegment satisfying $\alpha \leq \beta$ and $\Tilde{\alpha} \leq \Tilde{\beta}$, then $\alpha=\beta$. 
\end{lemma}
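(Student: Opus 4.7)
The plan is to chain together Lemmas \ref{lemma1}--\ref{lemma4} and exploit the fact that simplicity of $\alpha$ (and hence of $\widetilde{\alpha}$, by Lemma \ref{lemma2}) gives us the extremal identity $n_{\widetilde{\alpha}} = L_\alpha$. This extremal identity is exactly the hypothesis required to feed into Lemma \ref{lemma3}. Once Lemma \ref{lemma3} forces $L_\alpha = L_\beta$, the conclusion $\alpha = \beta$ will drop out of Lemma \ref{lemma4}.

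More concretely, first I would observe that since $\alpha$ is simple, Lemma \ref{lemma2}(1) tells us $\widetilde{\alpha}$ is also simple, and Lemma \ref{lemma2}(2) applied to $\widetilde{\alpha}$ (using $\widetilde{\widetilde{\alpha}} = \alpha$) gives the identity $n_{\widetilde{\alpha}} = L_\alpha$, equivalently $L_\alpha = n_{\widetilde{\alpha}}$. This is precisely the hypothesis $L_{\widetilde{\alpha}} = n_\alpha$ of Lemma \ref{lemma3}, read in its interchanged form (swap $\alpha \leftrightarrow \widetilde{\alpha}$ and $\beta \leftrightarrow \widetilde{\beta}$). Since we are given both $\alpha \leq \beta$ and $\widetilde{\alpha} \leq \widetilde{\beta}$, the interchanged Lemma \ref{lemma3} applies and yields
\[
n_{\widetilde{\alpha}} = n_{\widetilde{\beta}} = L_\alpha = L_\beta.
\]

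Now I would invoke Lemma \ref{lemma4}: $\alpha$ is simple, $\alpha \leq \beta$, and $L_\alpha = L_\beta$ (so in particular the length hypothesis of Lemma \ref{lemma4} is met, with equality). The proof of Lemma \ref{lemma4} rules out any strict inequality $\alpha < \beta$, because a strict inequality would produce a segment in $\beta$ of length strictly greater than $L_\alpha$, contradicting $L_\beta = L_\alpha$. Therefore $\alpha = \beta$.

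The only real content is matching the correct hypothesis of Lemma \ref{lemma3} to the setting: the main obstacle is recognizing that simplicity of $\alpha$ forces the extremal equality $L_\alpha = n_{\widetilde{\alpha}}$, which is what allows us to collapse the chain of inequalities
$L_\alpha \leq L_\beta \leq n_{\widetilde{\widetilde{\beta}}} = n_\beta \leq n_\alpha = L_{\widetilde{\widetilde{\alpha}}}$ into equalities. Once both length invariants are pinned down, Lemma \ref{lemma4} finishes the argument with essentially no further work.
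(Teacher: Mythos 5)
Your proposal is correct and follows essentially the same chain of reasoning as the paper's proof: use Lemma \ref{lemma2} to obtain the extremal identity for a simple $\alpha$, feed it into Lemma \ref{lemma3} to force $L_\alpha = L_\beta$, and finish with Lemma \ref{lemma4}. One small bookkeeping slip worth flagging: applying Lemma \ref{lemma2}(2) to $\widetilde{\alpha}$ (with $\widetilde{\widetilde{\alpha}} = \alpha$) actually yields $n_\alpha = L_{\widetilde{\alpha}}$, whereas the identity you then write, $n_{\widetilde{\alpha}} = L_\alpha$, comes from applying Lemma \ref{lemma2}(2) to $\alpha$ directly; both identities hold here since $\alpha$ and $\widetilde{\alpha}$ are both simple, so the argument goes through regardless, but the justification in the parenthetical doesn't match the displayed equation.
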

\begin{proof}
$\alpha$ is simple, so from Lemma \ref{lemma2} $n_{\Tilde{\alpha}}=L_{\alpha}$. Lemma \ref{lemma3} implies $L_{\alpha}=L_{\beta}$. The hypothesis of Lemma \ref{lemma4} is satisfied and thus $\alpha=\beta$. 
\end{proof}

\subsection{Involution on orbits}
\label{dualorbits}
Following \cite{CFMMX}*{Section 6}, we explain the notion of a dual orbit which is used to define an analogous involution on orbits $C \mapsto \widehat{C}$ of $V_{\lambda}$. 

From \eqref{vlambdadef}, we may write 
\[V=\{x \in \mathfrak{gl}_n(\CC) : \operatorname{Ad}(\lambda(\Frob)x=qx\}. \]
Using the killing form on $\mathfrak{gl}_n(\CC)$, the dual variety $V^*$ may be realized as
\[V^*:=\{x \in \mathfrak{gl}_n(\CC) : \operatorname{Ad}(\lambda(\Frob)x=q^{-1}x\}.\]

We have a presentation of $V^*$ analogous to \eqref{vlambda}:
\[V^* =  \Hom(E_1,E_0)\times \Hom(E_{k-1},E_{k-2})\times \cdots \times \Hom(E_{k},E_{k-1}), \]
with an $H$-action
\[(h_k,\ldots,h_1,h_0) \ldots (y_{10}, y_{21},\ldots, y_{kk-1} )=(h_0y_{10}h_1^{-1}, h_1y_{21}h_2^{-1}, \ldots , h_{k-1} y_{kk-1} h_k^{-1}),\]
where $(h_k,\ldots,h_1,h_0) \in H$ and $(y_{10}, y_{21},\ldots, y_{kk-1} ) \in V^*$. 

The cotangent variety $T^*(V)$ for $V$ is simply $V \times V^*$. For $x \in V$ and $y \in V^*$, $[x,y]=xy-yx$ is the usual Lie bracket from $\mathfrak{gl}_n(\CC)$. The conormal variety, $\Lambda \subseteq T^*(V)$, is defined as the kernel of the map $[\cdot,\cdot]:T^*(V) \xrightarrow{} \operatorname{Lie}(H)$. 

By \cite{CFMMX}*{Proposition 6.3.1}, for each $H$-orbit $C$ in $V$, 
\begin{equation}
 \label{conormal}   
\Lambda_C:=\{(x,y) \in \Lambda : x \in C\}
\end{equation}
is the conormal bundle to $C$. By identifying $V^{**} \simeq V$, for an $H$-orbit $C^*$ in $V^*$ we may write
\[\Lambda_{C^*}=\{(x,y) \in \Lambda : x \in C^*\}\]
where $\Lambda_{C^*}$ is the conormal bundle to $C^*$. 

For any $C$, there exists a unique orbit $C^*$ in $V^*$ with $\bar{\Lambda}_C \simeq \bar{\Lambda}_{C^*}$; see \cite{Pyasetskii}*{Corollary 2}. The map $C \mapsto C^*$ defines a bijection the set of $H$-orbits in $V$ and the set of $H$-orbits in $V^*$. Using this, we may define
\[\widehat{C}:=\prescript{t}{}{C^*},\]
where $y \mapsto \prescript{t}{}{y}$ is matrix transposition. Then $C \mapsto \widehat{C}$ defines an involution on the orbits. 

In section \ref{mw_alg}, we wrote down the Zelevinsky involution on multisegments and in this section we see an involution on orbits. Since multisegments and orbits correspond to each other via rank triangles as seen in Section \ref{ranktri-multiseg}, it is natural to ask whether this involution is compatible under this correspondence. Crucially, it is. More precisely, if the multisegment $\alpha$ corresponds to the orbit $C$, then $\Tilde{\alpha}$ corresponds to $\widehat{C}$. See \cite{knight1996representations} for more details. 

\begin{example}
From Example \ref{multisegment inv exam}, we see that the involution applied to $\{[-1,1],[0]\}$ gives $\{[1],[0],[0],[-1]\}$. From Example \ref{multisegment-rk tri example}, we see that the involution interchanges the open orbit (with elements of full rank) with the closed point (origin). 
\end{example}
\section{Vogan's conjecture for simple Arthur parameters}
\label{proof}

Recall the simple Arthur parameter $\psi$ defined in \eqref{arthurpar}, the Langlands parameter $\phi_{\psi}$ coming from it defined in \eqref{langlandspar}, and its infinitesimal parameter $\lambda$ defined in \eqref{infpar}. We remind the reader that $G=\GL_n$. Now, $\pi \in \Pi(G(F))$ is said to have the infinitesimal parameter $\lambda$ if its corresponding Langlands parameter $\phi$ has the infinitesimal parameter $\lambda$. Let $\Pi_{\lambda}(G(F))$ denote all such $\pi$ and $\Phi_{\lambda}(G(F))$ denote equivalence classes of all such $\phi$. As outlined in sections \ref{vogan langlands corresp} and \ref{modspace}, equivalence classes of smooth irreducible representations of $G(F)$ with infinitesimal parameter $\lambda$ correspond to equivalence classes of simple objects in $\Perv_H(V)$. This bijection is summarized below:
\[    
\begin{tikzcd}
	{\Pi_{\lambda}(G(F))} & {\Phi_{\lambda}(G(F))} & {H \operatorname{-orbits} \operatorname{ in}V} & {\operatorname{Perv}_H(V)^{\operatorname{simple}}_{/\operatorname{iso}},} & {} \\[-10pt]
	{\pi \hspace{2mm}} & \phi & {\hspace{2mm}C_{\phi} \hspace{2mm}} & {\hspace{2mm}\mathcal{P}(\pi)=\mathcal{IC}(\mathbb{1}_{C_{\phi}}).}
	\arrow[from=1-1, to=1-2]
	\arrow[from=1-2, to=1-3]
	\arrow[maps to, from=2-1, to=2-2]
	\arrow[maps to, from=2-2, to=2-3]
	\arrow[from=1-3, to=1-4]
	\arrow[maps to, from=2-3, to=2-4]
\end{tikzcd}
\]
Thus, there is a unique orbit $C_{\phi_{\psi}}$ in $V$ attached to $\phi_{\psi}$. We shorten this notation to $C_{\psi}$. 
This orbit in turn corresponds to a simple multisegment - one can see this directly by writing down the multisegment for the Langlands parameter $\phi_{\psi}$ when $\psi(w,x,y)=\op{Sym}^d(x) \otimes \op{Sym}^a(y)$: 
\begin{equation}
 \label{simple mult}   
\left\{\left[\tfrac{-(d+a)}{2},\tfrac{-(d+a)}{2}+d\right], \left[\tfrac{-(d+a)}{2}+1,\tfrac{-(d+a)}{2}+d+1\right],\ldots, \left[\tfrac{-(d+a)}{2}+a,\tfrac{-(d+a)}{2}+d+a\right]\right\}.
\end{equation}
The dictionary between Langlands parameters and multisegments is written down in many places, see \cite{kudla1994local}*{Section 4.2}, for example. In Section \ref{ranktri-multiseg} we saw that rank triangles and multisegments correspond to each other and this correspondence respects the partial order on both sides. In section \ref{dualorbits}, we see that this correspondence respects the involution on both sides. Thus, we may adapt Lemma \ref{mainlemma} as follows:
\begin{lemma}
\label{orbitduallemma}
Let $C_{\psi}$ be the orbit corresponding to a simple Arthur parameter as explained above. Let $C$ be any $H$-orbit in $V$. If $C_{\psi} \leq C$ and $\widehat{C}_{\psi} \leq \widehat{C}$, then $C_{\psi}=C$.  
\end{lemma}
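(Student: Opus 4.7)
The plan is to deduce this lemma directly from Lemma \ref{mainlemma} by translating the statement from the geometric language of orbits into the combinatorial language of multisegments. The dictionary needed for this translation is already in place from the earlier sections, so my job is essentially to invoke three compatibilities in sequence: orbits to multisegments, partial order to partial order, and involution to involution.

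First, I would let $\alpha_\psi$ and $\alpha$ denote the multisegments corresponding to the orbits $C_\psi$ and $C$ respectively, via the rank triangle bijection established in Section \ref{ranktri-multiseg}. The key initial observation is that $\alpha_\psi$ is a simple multisegment in the sense of Section \ref{zel-inv}: this is read off directly from the explicit formula \eqref{simple mult}, which exhibits $\alpha_\psi$ in the form
\[
\{[b, e], [b+1, e+1], \ldots, [b+a, e+a]\}
\]
with $b = -(d+a)/2$ and $e = -(d+a)/2 + d$. Thus the hypothesis ``simple Arthur parameter'' on $\psi$ translates precisely to the hypothesis ``simple multisegment'' on $\alpha_\psi$ needed for Lemma \ref{mainlemma}.

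Next I would translate the two inequalities. The hypothesis $C_\psi \leq C$ gives $\alpha_\psi \leq \alpha$ by the contiguity theorem of \cite{zelevinskii1981p}, as recalled at the end of Section \ref{ranktri-multiseg}, which states that the partial order on orbits (Definition \ref{orbitorder}) matches the partial order on multisegments (Definition \ref{multiorder}). For the second hypothesis, I invoke the compatibility between the involution on orbits and the Zelevinsky involution on multisegments noted in Section \ref{dualorbits}: the multisegments corresponding to $\widehat{C}_\psi$ and $\widehat{C}$ are $\widetilde{\alpha}_\psi$ and $\widetilde{\alpha}$ respectively. Applying the contiguity theorem once more to $\widehat{C}_\psi \leq \widehat{C}$ yields $\widetilde{\alpha}_\psi \leq \widetilde{\alpha}$.

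With both hypotheses of Lemma \ref{mainlemma} verified, I conclude $\alpha_\psi = \alpha$, and then the bijectivity of the correspondence between $H$-orbits in $V$ and multisegments forces $C_\psi = C$. There is no real obstacle here beyond bookkeeping, since all the substantive combinatorial work was done in Lemma \ref{mainlemma}; the only thing to be careful about is citing the correct compatibility (contiguity for order, Knight--Zelevinsky for involution) and verifying simplicity of $\alpha_\psi$ from \eqref{simple mult} rather than asserting it. The structure of the argument is the standard one of transporting a theorem across an equivalence, and the main content has been concentrated upstream in Lemma \ref{mainlemma}.
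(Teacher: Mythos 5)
Your proposal is correct and follows the paper's intended proof exactly: the paper states Lemma \ref{orbitduallemma} immediately after observing that $C_\psi$ corresponds to the simple multisegment \eqref{simple mult}, that the orbit/multisegment correspondence respects the partial order (Section \ref{ranktri-multiseg}), and that it respects the involution (Section \ref{dualorbits}), and then declares the lemma to be an adaptation of Lemma \ref{mainlemma}. You have simply written out in full the translation the paper leaves implicit.
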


\begin{lemma}
\label{hatpsilemma}
Let $C_{\psi}$ be the orbit corresponding to a simple Arthur parameter. Then
\[
\widehat{C}_{\psi} 
= 
C_{\widehat{\psi}} ,
\]
where ${\widehat{\psi}}$ is the the simple Arthur parameter obtained by interchanging the role of the two $\op{SL}_2(\CC)$'s in $W_F''$: \[\widehat{\psi}(w,x,y):=\psi(w,y,x).\]
\end{lemma}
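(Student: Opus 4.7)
The plan is to prove the lemma combinatorially: pass both orbits to their associated multisegments, compute the Zelevinsky involution using the explicit formula of Lemma~\ref{lemma2}(1), and then invoke the compatibility recorded at the end of Section~\ref{dualorbits} (coming from \cite{knight1996representations}) between the Zelevinsky involution $\alpha \mapsto \widetilde{\alpha}$ on multisegments and the involution $C \mapsto \widehat{C}$ on orbits.

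Concretely, I would first read off from \eqref{simple mult} that $\alpha_\psi$ is the simple multisegment with $a+1$ segments of length $d+1$ and common base offset $b := -(d+a)/2$. Since $\widehat{\psi}(w,x,y) = \op{Sym}^d(y)\otimes \op{Sym}^a(x)$ simply interchanges the roles of $a$ and $d$ in the definition of $\psi$, and since by \eqref{lambda eigenvalues} the infinitesimal parameter $\lambda$ is itself symmetric in $a$ and $d$, the multisegment $\alpha_{\widehat{\psi}}$ attached to $\widehat{\psi}$ is again built off the same base offset $b$, only now with $d+1$ segments of length $a+1$. I would then apply Lemma~\ref{lemma2}(1) to $\alpha_\psi$, note that $e - b = d$ so that the index $k$ in that lemma ranges over $\{0,1,\ldots,d\}$, and verify by the bijective substitution $k \mapsto d - k$ that the resulting multisegment $\widetilde{\alpha_\psi}$ matches $\alpha_{\widehat{\psi}}$ term by term. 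The compatibility above then converts this into the orbit identity $\widehat{C}_\psi = C_{\widehat{\psi}}$.

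I do not anticipate a real obstacle: Lemma~\ref{lemma2}(2) already tells us qualitatively that the Zelevinsky involution on a simple multisegment swaps the number of segments with the segment length, which is precisely the effect of swapping $a \leftrightarrow d$ in the Arthur parameter. The only content beyond this qualitative statement is the base-value bookkeeping, which is handled by the substitution described above together with the symmetry of $b$ in $a$ and $d$.
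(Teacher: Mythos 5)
Your proof is correct and takes essentially the same approach as the paper: compute the Zelevinsky dual of the multisegment \eqref{simple mult}, observe it is the multisegment of $\widehat\psi$, and transport to orbits via the compatibility from \cite{knight1996representations}. The only cosmetic difference is that you route the dual computation through the explicit formula in Lemma~\ref{lemma2}(1) (with the reindexing $k \mapsto d-k$), whereas the paper writes out the dual multisegment directly from the Moeglin--Waldspurger algorithm.
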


\begin{proof}
Since $\psi$ is simple, $C_{\psi}$ corresponds to the multisegment written in \eqref{simple mult}. The dual of this multisegment, in the sense of \ref{mw_alg}, is
\begin{equation*}
    \left\{\left[\tfrac{-(d+a)}{2},\tfrac{-(d+a)}{2}+a\right], \left[\tfrac{-(d+a)}{2}+1,\tfrac{-(d+a)}{2}+a+1\right],\ldots, \left[\tfrac{-(d+a)}{2}+d,\tfrac{-(d+a)}{2}+d+a\right]\right\}.
\end{equation*}
By inspection, this multisegment comes from the simple Arthur parameter $(w,x,y) \mapsto \op{Sym}^a(x)\otimes \op{Sym}^d(y)$. From the discussion in \ref{dualorbits}, this multisegment also corresponds to $\widehat{C}_{\psi}$, as required.
\end{proof}

We recall the functor defined in \eqref{evs}: 
\[    \Evs_{\psi}: \Perv_{H_{\lambda_\psi}}(V_{\lambda_\psi}) \xrightarrow{} \Rep(A_\psi),
.\]
For the Langlands parameter $\phi_{\psi}$, we recall the ABV-packet from \eqref{abvpacket}:
\begin{equation*}
    \Pi_{\phi_{\psi}}^{\ABV}(G(F)):=\{\pi \in \Pi_{\lambda}(G(F)): \operatorname{Evs}_{\psi}(\mathcal{P}(\pi))\neq 0 \}.
\end{equation*} 

We now state and prove the main theorem of this paper.
\begin{theorem}\label{ wow main theorem}
Let $\psi$ be a simple Arthur parameter of $G=\GL_n$, $\phi_{\psi}$ its corresponding Langlands parameter, $\Pi_{\psi}(G(F))$ the Arthur packet attached to $\psi$, and $\Pi_{\phi_{\psi}}^{\ABV}(G(F))$ the ABV-packet attached to $\phi_{\psi}$. Then,

\[ \Pi_{\phi_{\psi}}^{\ABV}(G(F))=\Pi_{\psi}(G(F)).\]
\end{theorem}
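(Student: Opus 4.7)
Since Arthur packets for $\GL_n$ are singletons, $\Pi_\psi(G(F))=\{\pi_\psi\}$, where $\pi_\psi$ is the smooth irreducible representation with infinitesimal parameter $\lambda$ attached via the Vogan-Langlands bijection to the orbit $C_\psi\subseteq V$. The theorem therefore reduces to the two set-theoretic inclusions: the forward inclusion $\{\pi_\psi\}\subseteq \Pi_{\phi_\psi}^{\ABV}(G(F))$, amounting to $\Evs_\psi(\mathcal{IC}(\1_{C_\psi}))\neq 0$, and the reverse inclusion $\Pi_{\phi_\psi}^{\ABV}(G(F))\subseteq \{\pi_\psi\}$, amounting to the statement that no other orbit contributes.

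For the forward inclusion, I would compute $\Evs_\psi(\mathcal{IC}(\1_{C_\psi}))$ directly via \eqref{equation:EvsEv} together with the stalk description $(\Ev_{C_\psi}\mathcal P)_{(x,\xi)}=(\RPhi_\xi \mathcal P)_x$. Since $(x_\psi,\xi_\psi)\in \Lambda^{\text{reg}}_{C_\psi}$ and $\mathcal{IC}(\1_{C_\psi})$ restricts to a shifted constant sheaf on $C_\psi$, this vanishing-cycles stalk is one-dimensional by a standard non-characteristic calculation, yielding $\Evs_\psi(\mathcal{IC}(\1_{C_\psi}))\neq 0$.

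For the substantive reverse inclusion, let $\pi\in\Pi^{\ABV}_{\phi_\psi}(G(F))$ correspond to an $H$-orbit $C\subseteq V$, so that $\mathcal{P}(\pi)=\mathcal{IC}(\1_C)$ and $\Evs_\psi(\mathcal{IC}(\1_C))\neq 0$. The plan is to derive the two inequalities
\[
C_\psi\leq C \qquad \text{and} \qquad \widehat{C}_\psi\leq \widehat{C},
\]
and then invoke Lemma \ref{orbitduallemma} to conclude $C=C_\psi$, hence $\pi=\pi_\psi$. The first inequality is forced by the elementary support condition for vanishing cycles: the stalk $(\RPhi_{\xi_\psi}\mathcal{IC}(\1_C))_{x_\psi}$ can be nonzero only if $x_\psi \in \operatorname{supp}\mathcal{IC}(\1_C)=\overline{C}$, which translates to $C_\psi\leq C$. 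For the second inequality, I would invoke the Fourier-Sato symmetry relating equivariant perverse sheaves on $V$ and $V^*$: this symmetry sends $\mathcal{IC}(\1_C)$ to $\mathcal{IC}(\1_{C^*})$ up to shift, exchanges the roles of $x$ and $\xi$ in the vanishing-cycles stalk formula, and, combined with the transposition used to define the involution $C\mapsto \widehat C$, transports the regular conormal datum at $(x_\psi,\xi_\psi)$ to a regular conormal datum over the orbit $C_{\widehat\psi}$. Using Lemma \ref{hatpsilemma} to identify $\widehat{C}_\psi = C_{\widehat\psi}$, the non-vanishing of $\Evs_\psi(\mathcal{IC}(\1_C))$ then yields the dual support condition $\widehat{C}_\psi\leq\widehat C$.

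The main obstacle will be making the Fourier/Pyasetskii symmetry step rigorous at the level of the functor $\Evs_\psi$ rather than just at the level of orbit closures. The $V$-side support condition is essentially automatic from the stalk description of $\RPhi$, but the dual condition requires a careful invocation of the involutive properties of $\Ev$ established in CFMMX, Section 7, together with the explicit identification of $\widehat{C}_\psi$ afforded by Lemma \ref{hatpsilemma}. Once both inequalities are available, the combinatorial core --- Lemma \ref{orbitduallemma}, which descends from the Moeglin-Waldspurger analysis of simple multisegments via the rank-triangle correspondence and the involution-compatibility of that correspondence --- closes the argument in one step.
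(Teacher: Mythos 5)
Your proposal mirrors the paper's argument exactly: both reduce the reverse inclusion to establishing $C_\psi \le C$ and $\widehat{C}_\psi \le \widehat{C}$ and then close with Lemma~\ref{orbitduallemma}, with Lemma~\ref{hatpsilemma} supplying the identification $\widehat{C}_\psi = C_{\widehat\psi}$. Where you flag the Fourier/Pyasetskii symmetry step as the remaining obstacle, the paper simply cites \cite{KS_sing}*{Proposition 3.2.1}, which records precisely the two implications you need (support of vanishing cycles gives $C_\psi \le C$, and the Fourier--Sato/transposition compatibility of $\Ev$ gives $\widehat{C}_\psi \le \widehat{C}$), and \cite{CFMMX}*{Theorem 7.19} likewise covers the stalk nonvanishing you sketch for the forward inclusion.
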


\begin{proof}
The local Langlands correspondence tells us that $L$-packets are singletons. As discussed in \ref{apackets}, the Arthur packet $\Pi_{\psi}(G(F))$ is a singleton and it coincides with the $L$-packet for $\phi_{\psi}$. We may write
\[\Pi_{\psi}(G(F))=\Pi_{\phi_{\psi}}(G(F))=\{\pi_{\psi}\}.\] 

Let $C_{\psi}$ be the unique orbit associated to the Langlands parameter $\phi_{\psi}$. By \cite{CFMMX}*{Theorem 7.19}, $\Evs_{\psi} \mathcal{IC}(\mathbb{1}_{C_{\psi}}) \neq 0$. By definition, this implies 
\[\Pi_{\phi_{\psi}}(G(F)) \subseteq \Pi_{\phi_{\psi}}^{\ABV}(G(F)),\]
and therefore \[\Pi_{\psi}(G(F)) \subseteq \Pi_{\phi_{\psi}}^{\ABV}(G(F)).\]
To prove the theorem, it is sufficient to prove that  $\Pi_{\phi}^{\ABV}(G(F))$ is a singleton, \textit{i.e.}, it contains no representation other that $\{\pi_{\psi}\}$. 

Suppose we have a representation $\pi \in \Pi_{\phi}^{\ABV}(G(F))$. By definition, this means
\begin{equation}
\label{defeq}
\operatorname{Evs}_{\psi}(\mathcal{P}(\pi)) \neq 0.
\end{equation}
From the discussion in sections \ref{vogan langlands corresp} and \ref{modspace}, we know that $\pi$ corresponds to an $H$-orbit $C$ in $V$ and $\mathcal{P}(\pi)=\mathcal{IC}(\mathbb{1}_C)$. We may rephrase \eqref{defeq} as
\begin{equation}
\label{evsnonzero}
    \operatorname{Evs}_{\psi}\mathcal{IC}(\1_C) \neq 0.
\end{equation}
Using \cite{KS_sing}*{Proposition 3.2.1} and Lemma~\ref{hatpsilemma}, \eqref{defeq} also implies that 
\begin{equation}
\label{evsdualnonzero}
    \operatorname{Evs}_{\widehat{\psi}}\mathcal{IC}(\1_{\widehat C}) \neq 0.
\end{equation}
Again from \cite{KS_sing}*{Proposition 3.2.1}, \eqref{evsnonzero} implies that $C_{\psi} \leq C$ and \eqref{evsdualnonzero} implies $\widehat{C}_{\psi} \leq \widehat{C}$. The hypothesis of Lemma \ref{orbitduallemma} is satisfied, so we may conclude that $C_{\psi}=C$. The corresponding representations are the same, as required.

\end{proof}

\section{Vogan's conjecture for irreducible Arthur parameters}\label{irreducible a parameter section}
We now extend the result in Theorem \ref{ wow main theorem} to an \textit{irreducible} Arthur parameter of $\GL_n$, {\it i.e.},
of the form
\begin{equation}\label{irred a par}
\psi(w,x,y)= \rho(w) \otimes \op{Sym}^d(x) \op{Sym}^a(y),\end{equation}
where $\rho$ is an irreducible admissible representation of $W_F$ of dimension $m$, and $a$ and $d$ are non-negative integers. First note that $\rho=\rho_0 \otimes |\cdot |^s$, where $\rho_0$ is of Galois type and $|\cdot |$ is the absolute value on the Weil group as explained in section \ref{modspace}. 

We apply the procedure of hyper-unramification as explained in \cite{CFMMX}*{Section 5} on the infinitesimal parameter $\lambda$. Note that $\lambda(w)=\rho_0(w) \otimes |w|^s \otimes \op{Sym}^d(d_w) \otimes \op{Sym}^a(d_w)$ where $d_w$ is defined in Section \ref{modspace}. For any $X \in \op{SL}_2(\CC)$, set $\eta(X):=\op{Sym}^d(X) \otimes \op{Sym}^a(X)$. 

\begin{lemma}\label{hyperbolic-elliptic}
The semisimple element $\lambda(\Frob)$ has the decomposition $\lambda(\Frob)=s_{\lambda}t_{\lambda}$ where \[s_{\lambda}= \rho_0\begin{pmatrix}1&0\\0&1\end{pmatrix} \otimes |\Frob|^{\op{Re}(s)} \otimes \eta(d_{\Frob})\]
is the hyperbolic part and 
\[t_{\lambda}= \rho_0(\Frob) \otimes |\Frob|^{i\op{Im}(s)} \otimes \eta \begin{pmatrix}1&0\\0&1\end{pmatrix}\] is the elliptic part.
\end{lemma}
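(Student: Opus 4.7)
The plan is to compute $\lambda(\Frob)$ explicitly using the factorization
\[
\lambda(w) = \rho_0(w) \otimes |w|^s \otimes \op{Sym}^d(d_w) \otimes \op{Sym}^a(d_w),
\]
and then to decompose each tensor factor into its hyperbolic and elliptic parts, invoking the fact that the Jordan decomposition of a semisimple element into hyperbolic and elliptic parts is multiplicative under tensor products: if $A = A_h A_e$ and $B = B_h B_e$ are such decompositions with all four pieces pairwise commuting inside the appropriate general linear group, then $(A\otimes B)_h = A_h \otimes B_h$ and $(A\otimes B)_e = A_e \otimes B_e$. This reduces the whole lemma to identifying the hyperbolic/elliptic parts on each of the three tensor factors.

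Next I would handle each factor separately. Since $\rho_0$ is of Galois type, it factors through a finite quotient of $W_F$, so $\rho_0(\Frob)$ has finite order and in particular unitarizable eigenvalues; it is therefore elliptic, and its hyperbolic part is $\rho_0$ applied to the identity, i.e.\ the identity matrix of size $m$. For the character $|\cdot|^s$, one evaluates $|\Frob|^s = q^s = q^{\op{Re}(s)}\cdot q^{i\op{Im}(s)}$; the first factor is a positive real scalar, hence hyperbolic, and the second has absolute value one, hence elliptic. For $\eta(d_\Frob) = \op{Sym}^d(d_\Frob)\otimes \op{Sym}^a(d_\Frob)$, note that $d_\Frob = \op{diag}(q^{1/2},q^{-1/2})$ is diagonal with positive real eigenvalues, so each symmetric power is diagonal with positive real eigenvalues and $\eta(d_\Frob)$ is hyperbolic; its elliptic part is therefore $\eta$ of the identity matrix.

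Finally I would assemble the pieces. Taking the tensor product of the three hyperbolic pieces gives
\[
s_\lambda = \mathbb{1} \otimes |\Frob|^{\op{Re}(s)} \otimes \eta(d_\Frob),
\]
which matches the expression in the statement (with the slight abuse of notation writing $\rho_0$ applied to the identity element as $\rho_0\begin{pmatrix}1&0\\0&1\end{pmatrix}$). Taking the tensor product of the three elliptic pieces gives
\[
t_\lambda = \rho_0(\Frob) \otimes |\Frob|^{i\op{Im}(s)} \otimes \eta\!\begin{pmatrix}1&0\\0&1\end{pmatrix},
\]
as required. One then checks that $s_\lambda$ and $t_\lambda$ commute (they do, because each pair of tensor factors commutes) and that their product recovers $\lambda(\Frob)$, which is immediate from the factor-by-factor computation.

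\textbf{Main obstacle.} There is no serious obstacle; this is essentially a bookkeeping argument that relies only on (i)~the multiplicativity of hyperbolic/elliptic Jordan decomposition under tensor products of commuting semisimple operators, and (ii)~the elementary observation that $\rho_0(\Frob)$ is unitarizable while $d_\Frob$ has positive real eigenvalues. The only care required is in splitting the character $|\cdot|^s$ according to the real and imaginary parts of the complex parameter $s$, and in remembering that the hyperbolic/elliptic parts must be computed inside $\GL_n(\CC)$ (so positivity of eigenvalues, not just semisimplicity, is what matters).
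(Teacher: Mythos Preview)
Your proposal is correct and essentially mirrors the paper's proof: both rely on the fact that $\rho_0(\Frob)$ has finite order (hence unit-norm eigenvalues) because $\rho_0$ is of Galois type, and that $\eta(d_\Frob)$ has positive real eigenvalues. The only cosmetic difference is packaging: you invoke the general multiplicativity of the hyperbolic/elliptic decomposition under tensor products and then assemble, whereas the paper simply writes down $s_\lambda$ and $t_\lambda$, checks $s_\lambda t_\lambda = t_\lambda s_\lambda = \lambda(\Frob)$, and directly verifies the eigenvalue conditions on each piece.
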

\begin{proof}
Clearly, $s_{\lambda}t_{\lambda} = t_{\lambda}s_{\lambda}=\lambda(\Frob)$. To show that $s_{\lambda}$ is hyperbolic, it suffices to show that its eigenvalues are positive real numbers. Indeed, after $s_{\lambda}$ is identified with a matrix, its eigenvalues are of the form $q^{\op{Re}s+\frac{ a+d}{2}+j}$ where $j$ is an integer. This can be seen from direct calculation aided by Equation \ref{lambda eigenvalues}. To show that $t_{\lambda}$ is elliptic, it suffices to show that its eigenvalues have complex norm 1. Indeed, $t_{\lambda}$ can be identified with the (block) matrix \[\begin{bmatrix} \rho_0(\Frob)q^{i\op{Im}s}&0& \cdots &0 \\
0&\rho_0(\Frob)q^{i\op{Im}s}& \cdots &0\\
\vdots&  & \ddots& \\
0&\cdots&0&\rho_0(\Frob)q^{i\op{Im}s}
\end{bmatrix},\]
so complex norm of its eigenvalues are determined by those of $\rho_0(\Frob)$. However, $\rho_0$ is of Galois-type which means it has finite image. Thus $\rho_0(\Frob)$ is a matrix of finite order, whose eigenvalues must be roots of unity. 

\end{proof}
The process of hyper-unramification described in \cite{CFMMX}*{Section 5.3} adapted to our case gives us an unramified parameter 
$\lambda_{nr}:W_F \xrightarrow[]{} J_{\lambda}$, 
where $J_{\lambda}$ is $Z_{\widehat{G}}(\lambda|_{I_F}, t_{\lambda})$ whose image of frobenius is the hyperbolic element $s_{\lambda}$. Using \cite{CFMMX}*{Lemma 5.3} we have that \[V_{\lambda_{nr}}=V_{\lambda}, \text{ and } H_{\lambda_{nr}}=H_{\lambda}^0. \]

\begin{lemma}\label{j/h lambda proof}
Suppose $\psi$ is an irreducible Arthur parameter as defined in Equation \ref{irred a par}. Then,
\begin{enumerate}
    \item $J_{\lambda}=\GL_{n/m}(\CC)$. 
    \item $H_{\lambda_{nr}}=H_{\lambda}$
\end{enumerate}
\end{lemma}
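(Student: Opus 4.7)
The plan is to apply Schur's lemma to the irreducible $\rho_0$ for part (1), and then deduce part (2) from the explicit description of $J_\lambda$ together with general structure theory of centralizers in $\GL_N(\CC)$.

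For (1), I would exploit the tensor factorization $\CC^n = \CC^m \otimes \CC^{(d+1)(a+1)}$ coming from $\psi = \rho \otimes \op{Sym}^d \otimes \op{Sym}^a$, in which $\rho_0$ acts on the first factor and $\eta(\bullet) = \op{Sym}^d(\bullet)\otimes\op{Sym}^a(\bullet)$ on the second. On $I_F$ the outer factors $|w|^s$ and $\eta(d_w)$ are trivial, so $\lambda|_{I_F}(w) = \rho_0(w)|_{I_F} \otimes \mathrm{Id}$; by Lemma~\ref{hyperbolic-elliptic}, the elliptic part $t_\lambda$ equals $\rho_0(\Frob)\otimes\mathrm{Id}$ up to the scalar $|\Frob|^{i\op{Im}(s)}$. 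So the conditions defining $J_\lambda = Z_{\widehat G}(\lambda|_{I_F}, t_\lambda)$ reduce to commuting with $\rho_0(w)\otimes \mathrm{Id}$ for each $w$ in the generating set $I_F\cup\{\Frob\}$ of $W_F$, hence for all $w\in W_F$. Since $\rho_0$ is $W_F$-irreducible, Schur's lemma gives $\op{End}_{W_F}(\CC^m) = \CC\cdot\mathrm{Id}$, and the centralizer algebra of $\{\rho_0(w)\otimes\mathrm{Id}\}_{w\in W_F}$ in $\op{End}(\CC^n)$ reduces to $\mathrm{Id}\otimes\op{End}(\CC^{(d+1)(a+1)})$. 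Passing to units yields $J_\lambda = \mathrm{Id}\otimes\GL_{(d+1)(a+1)}(\CC)\cong\GL_{n/m}(\CC)$.

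For (2), \cite{CFMMX}*{Lemma 5.3} already delivers $H_{\lambda_{nr}}=H_\lambda^0$, so the task reduces to showing $H_\lambda$ is connected. I would first observe $H_\lambda\subseteq J_\lambda$: any element centralizing $\lambda(W_F)$ in particular centralizes the semisimple element $\lambda(\Frob) = s_\lambda t_\lambda$, and by uniqueness of the commuting hyperbolic--elliptic decomposition it centralizes $t_\lambda$ individually. Next, identify $H_\lambda = Z_{J_\lambda}(s_\lambda)$: inside $J_\lambda$, commuting with $\lambda(\Frob)$ is equivalent to commuting with $s_\lambda = \lambda(\Frob)\, t_\lambda^{-1}$. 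Combining with part (1), $H_\lambda$ is realized as the centralizer of the semisimple element $s_\lambda$ in $\GL_{n/m}(\CC)$, which is a Levi subgroup, i.e.\ a product of general linear groups, and therefore connected. Hence $H_\lambda = H_\lambda^0 = H_{\lambda_{nr}}$.

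The main obstacle is the tensor-product bookkeeping in part (1): one must carefully check that the scalar factor $|\Frob|^{i\op{Im}(s)}$ and the identity block $\eta(\mathrm{Id})=\mathrm{Id}$ appearing in $\lambda|_{I_F}$ and $t_\lambda$ drop harmlessly from the centralizer condition, so that Schur's lemma applies cleanly to the bare $\rho_0$. Once (1) is in hand, (2) is essentially immediate from the structure theory of centralizers of semisimple elements in general linear groups.
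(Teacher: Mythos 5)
Your proof is correct, and for part (1) it follows essentially the same path as the paper: both arguments reduce the centralizer condition defining $J_\lambda$ to centralizing $\rho_0(w)\otimes\mathrm{Id}$ for $w$ ranging over a generating set of $W_F$ (inertia generators together with $\Frob$, using that $t_\lambda$ carries the $\rho_0(\Frob)$ factor), and then apply Schur's lemma to the irreducible $\rho_0$. You phrase the bookkeeping via the tensor factorization $\CC^n = \CC^m\otimes\CC^{(d+1)(a+1)}$, while the paper writes the same computation as explicit $m\times m$ block matrices $(X_{rs})_{r,s}$ and concludes each block is scalar; these are two presentations of the same calculation, and both correctly handle the harmless scalar $|\Frob|^{i\op{Im}(s)}$ and identity factor $\eta(\mathrm{Id})$.

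For part (2) you take a genuinely longer route than the paper. The paper observes directly that $H_\lambda = Z_{\widehat G}(\lambda)$ is the centralizer in $\GL_n(\CC)$ of a subset of $\op{Mat}_n(\CC)$; any such centralizer is the unit group of the commutant subalgebra $Z_{\op{Mat}_n(\CC)}(\lambda(W_F))$, hence a Zariski-open dense subset of a vector space, hence connected. So $H_\lambda = H_\lambda^0$, and \cite{CFMMX}*{Lemma 5.3} gives $H_{\lambda_{nr}}=H_\lambda^0=H_\lambda$. Your alternative --- using uniqueness of the commuting hyperbolic--elliptic decomposition to show $H_\lambda\subseteq J_\lambda$, then identifying $H_\lambda = Z_{J_\lambda}(s_\lambda)$, and finally invoking part (1) plus connectedness of centralizers of semisimple elements (Levi subgroups) in $\GL_{n/m}(\CC)$ --- is correct and gives some extra structural information (realizing $H_\lambda$ concretely inside $J_\lambda\cong\GL_{n/m}(\CC)$), but it depends on part (1) and on two supporting facts where the paper's single elementary observation suffices.
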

\begin{proof}
\begin{enumerate}
    \item By definition, $J_{\lambda}=Z_{\hat{G}}(\lambda|_{I_F}, t_{\lambda})$. Now $\lambda(w)|_{I_F}=\rho(w)|_{I_{F}} \otimes \eta(d_w)|_{I_F}$. Since $I_F$ consists of elements of absolute value 1, $\lambda(w)|_{I_F}=\rho_0(w)|_{I_F}\otimes \eta \begin{pmatrix} 1&0\\0&1 \end{pmatrix}$. As $\rho_0$ is of Galois-type, it is determined by a finite list of elements. In particular, there are elements $\sigma_1,\sigma_2,\ldots, \sigma_k$ in $I_F$ so that their images (block matrices) of the form 
    \[\begin{bmatrix}\rho_0(\sigma_i)&0&\cdots&0\\
    0&\rho_0(\sigma_i)& \cdots&0\\
    \vdots&&\ddots&\\
    0&0&\cdots& \rho_0(\sigma_i)
    \end{bmatrix}\]
    determine $\rho_0|_{I_F}$. 
    \begin{align*}
        J_{\lambda}&=Z_{\GL_n(\CC)}(\lambda(w)|_{I_F}, t_{\lambda})\\
                   &=\{X \in \GL_n(\CC) : t_{\lambda }Xt_{\lambda}^{-1}=X \text{ and } \lambda(\sigma_i)X\lambda(\sigma_i)^{-1}=X  \text{ for } 1 \leq i \leq k \} \\
                   &=\{X=(X_{rs})_{r,s} \in \GL_n(\CC) \text{ where } X_{rs} \in \op{Mat}_m(\CC): \rho_0(\Frob) X_{rs} \rho_0(\Frob)^{-1}=X_{rs} \text{ and } \rho_0(\sigma_i) X_{rs} \rho_0(\sigma_i)^{-1}=X_{rs}  \}\\
                   &=\{X=(X_{rs})_{r,s} \in \GL_n(\CC) \text{ where } X_{rs} \in \op{Mat}_m(\CC): X_{rs} \in Z_{\op{Mat}_n(\CC)}(\rho_0)\}\\
                   &=\{X=(c_{rs}\op{Id}_m)_{r,s} \in \GL_n(\CC) \text{ where } c_{rs} \in \CC\}\\
                   &\simeq \GL_{n/m}(\CC).
    \end{align*}
    The first equality is by definition, the second by the fact that $\lambda|_{I_F}$ is determined by $\sigma_is$ as explained above, the third follows by writing down $X, t_{\lambda}$, and $\lambda(\sigma_i)s$ as block matrices, the fourth follows from the fact that $\rho_0$ is completely determined by $\Frob$ and $\sigma_i$s, and the fifth follows from Schur's lemma applied to the irreducible representation $\rho_0$ for every $r,s$. The isomorphism in the sixth step comes from realizing $(c_{rs}\op{Id}_m)_{r,s}$ as $(c_{rs})_{r,s}\otimes \op{Id}_m$ whose determinant is $\op{det}((c_{rs})_{r,s})^m$ using the determinant of the Kronecker product of two matrices. 
    \item As centralizers in $\GL_n(\CC)$ are connected, $H_{\lambda}$ is connected so $H_{\lambda_{nr}}=H_{\lambda}^0=H_{\lambda}$.
    
\end{enumerate}
\end{proof}

The above lemma shows that if we replace $J_{\lambda}$ with the isomorphic  general linear group, we may interpret the unramified infinitesimal parameter as a homomorphism $W_F\xrightarrow{} \GL_{n/m}(\CC)$ whose image at the frobenius can now be thought of as $|\Frob|^{\op{Re}s}\otimes \eta(d_{\Frob})$, and whose Vogan variety and group acting on it remain the same as that of $\lambda$. Also observe that removing the factor $|\Frob|^{\op{Re}s}$ from this infinitesimal parameter does not change the variety and the group acting on it for trivial reasons. We collect these observations on hyper-unramification of $\psi$ as defined in \ref{irred a par} in the following lemma.

\begin{lemma}\label{unramification lemma}
Suppose $\psi$ is an \textit{irreducible} Arthur parameter of $\GL_n$, {\it i.e.}, \[\psi(w,x,y)=\rho(w) \otimes \op{Sym}^d(x) \otimes \op{Sym}^a(y),\]
where $\rho$ is an irreducible representation of $W_F$ in $\GL_m(\CC)$. We obtain the unramified parameter \[\lambda_{nr}:W_F\xrightarrow{} \GL_{n/m}(\CC)\]
defined via $w \mapsto \op{Sym}^d(d_w)\otimes \op{Sym}^a(d_w)$, with 
\[V_{\lambda_{nr}}\simeq V_{\lambda}, \text{ and } H_{\lambda_{nr}}\simeq H_{\lambda},\]
where these isomorphisms are obtained by an application of Schur's lemma, as in the proof of Lemma \ref{j/h lambda proof} (1).
\end{lemma}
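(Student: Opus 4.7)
The plan is to assemble this statement directly from the results already in hand: the hyperbolic-elliptic decomposition of $\lambda(\Frob)$ from Lemma~\ref{hyperbolic-elliptic}, the identification $J_{\lambda} \simeq \GL_{n/m}(\CC)$ from Lemma~\ref{j/h lambda proof}(1), the equality $H_{\lambda} = H_{\lambda}^0$ from Lemma~\ref{j/h lambda proof}(2), and the general framework of hyper-unramification from \cite{CFMMX}*{Section 5.3}. Nothing genuinely new needs to be proved; the lemma is primarily a packaging statement that tracks these identifications through the hyper-unramification construction.

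First I would invoke the hyper-unramification procedure recalled just before the statement, which attaches to $\lambda$ an unramified parameter $\lambda_{nr}: W_F \to J_\lambda$ whose value at $\Frob$ is the hyperbolic part $s_\lambda$ of $\lambda(\Frob)$. By Lemma~\ref{hyperbolic-elliptic}, $s_\lambda$ factors as $\op{Id}_m \otimes |\Frob|^{\op{Re}(s)} \otimes \op{Sym}^d(d_\Frob)\otimes \op{Sym}^a(d_\Frob)$, so under the Schur-lemma isomorphism $J_\lambda \simeq \GL_{n/m}(\CC)$ of Lemma~\ref{j/h lambda proof}(1), the $\op{Id}_m$ factor is absorbed and $\lambda_{nr}(\Frob)$ is transported to $|\Frob|^{\op{Re}(s)} \otimes \op{Sym}^d(d_\Frob) \otimes \op{Sym}^a(d_\Frob)$ sitting inside $\GL_{n/m}(\CC)$.

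Next I would observe that the central scalar factor $|\Frob|^{\op{Re}(s)}$ may be removed without altering the Vogan variety or the group acting on it. This is immediate from the defining equations \eqref{vlambdaequation} and \eqref{hlambdaequation}: multiplying $\lambda_{nr}(\Frob)$ by a central scalar leaves $\op{Ad}(\lambda_{nr}(\Frob))$ unchanged and likewise its centralizer in $\GL_{n/m}(\CC)$. Therefore the effective unramified parameter to record is $w \mapsto \op{Sym}^d(d_w)\otimes \op{Sym}^a(d_w)$, matching the formula in the statement.

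Finally, the isomorphisms $V_{\lambda_{nr}} \simeq V_\lambda$ and $H_{\lambda_{nr}} \simeq H_\lambda$ are obtained by combining \cite{CFMMX}*{Lemma 5.3}, which gives $V_{\lambda_{nr}} = V_\lambda$ and $H_{\lambda_{nr}} = H_\lambda^0$ inside $\widehat{G} = \GL_n(\CC)$, with Lemma~\ref{j/h lambda proof}(2), which identifies $H_\lambda^0$ with $H_\lambda$ since centralizers in $\GL_n(\CC)$ are connected. The isomorphisms in the statement then arise by transporting these equalities through the Schur-lemma identification of $J_\lambda$ with $\GL_{n/m}(\CC)$: an element $X$ of $V_\lambda$ is forced by its commutation relations with $\rho_0(I_F)\otimes \op{Id}$ to decompose into $m\times m$ blocks that are scalar multiples of $\op{Id}_m$, exhibiting $X = A\otimes\op{Id}_m$ with $A\in\mathfrak{gl}_{n/m}(\CC)$, and the remaining defining constraint on $X$ transcribes to the Vogan variety constraint on $A$ for $\lambda_{nr}$; an entirely analogous argument handles $H_\lambda$. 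The main thing to verify carefully is that this block-scalar transport is equivariant for the conjugation and adjoint actions on both sides, but this reduces to the elementary observation that $c\,\op{Id}_m$ commutes with every $m\times m$ matrix, so tensoring with $\op{Id}_m$ faithfully intertwines the $\GL_{n/m}(\CC)$-level action with the $\GL_n(\CC)$-level action.
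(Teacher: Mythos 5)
Your overall plan is sound and is essentially the paper's intended reading: the lemma is a packaging of the hyper-unramification machinery from \cite{CFMMX}*{Section 5.3} together with Lemma~\ref{hyperbolic-elliptic} and Lemma~\ref{j/h lambda proof}, and the paper itself gives no separate proof beyond collecting the observations made in the surrounding text. There is, however, a small but real inaccuracy in the final paragraph where you describe how Schur's lemma identifies $V_\lambda$ with a subspace of $\mathfrak{gl}_{n/m}(\CC)$.

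You claim that an element $X\in V_\lambda$ ``is forced by its commutation relations with $\rho_0(I_F)\otimes \op{Id}$'' to decompose into scalar-times-identity $m\times m$ blocks. Commutation with $\rho_0(I_F)$ alone is not enough: $\rho_0|_{I_F}$ need not be irreducible, so an element commuting with $\rho_0(I_F)\otimes\op{Id}$ could have blocks lying in $Z_{\op{Mat}_m(\CC)}(\rho_0(I_F))$ without being scalar. What one actually needs---and this is exactly what the proof of Lemma~\ref{j/h lambda proof}(1) does---is commutation with the image of all of $\rho_0$, i.e.\ with $\rho_0(\Frob)$ in addition to the $\rho_0(\sigma_i)$. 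That extra commutation is hidden inside the single constraint $\operatorname{Ad}(\lambda(\Frob))X = qX$: once $\lambda(\Frob) = s_\lambda t_\lambda$ is split into its commuting hyperbolic and elliptic parts (Lemma~\ref{hyperbolic-elliptic}), and one uses that $s_\lambda$ has positive real eigenvalues, $t_\lambda$ has unit-modulus eigenvalues, and $q$ is a positive real scalar, the equation splits into $\operatorname{Ad}(t_\lambda)X = X$ and $\operatorname{Ad}(s_\lambda)X = qX$. It is the first of these, which contains $\rho_0(\Frob)$, combined with the $I_F$-commutation, that lets Schur's lemma be applied to the irreducible representation $\rho_0$ of $W_F$ to force $X = A\otimes\op{Id}_m$. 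With that correction, the rest of your argument---transporting the residual constraint $\operatorname{Ad}(s_\lambda)X = qX$ to the defining constraint for $V_{\lambda_{nr}}$ inside $\mathfrak{gl}_{n/m}(\CC)$, discarding the central $|\Frob|^{\op{Re}(s)}$ factor, and the analogous analysis for $H_\lambda$---goes through exactly as you describe.
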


The upshot is that $\lambda_{nr}$ can be thought of as the infinitesimal parameter of $\psi_{nr}(w,x,y):= \op{Sym}^d(x)\otimes \op{Sym}^a(y)$, after an application of the isomorphisms $V_{\lambda_{nr}}\simeq V_{\lambda}$ and $H_{\lambda_{nr}}\simeq H_{\lambda}$ that are induced by taking scalar matrices to their respective scalars in a smaller matrix group.
Since the variety and group action are unchanged, the category $\op{Per}_{H_{\lambda_{nr}}}(V_{\lambda_{nr}})=\op{Per}_{H_{\lambda}}(V_{\lambda})$. As a result, we directly see that the isomorphisms give us a bijection $C \mapsto C_{nr}$ of orbits which preserves the ordering and the Zelevinsky involution. The geometric side of the Vogan-Langlands correspondence can thus be reduced to the study of the unramified infinitesimal parameter. This paves the way for generalizing Theorem \ref{ wow main theorem} from simple Arthur parameters to irreducible Arthur parameters. 

\begin{theorem}\label{bow wow main theorem}
Suppose $\psi$ is an \textit{irreducible} Arthur parameter of $G=\GL_n$, {\it i.e.}, \[\psi(w,x,y)=\rho(w) \otimes \op{Sym}^d(x) \otimes \op{Sym}^a(y),\]
where $\rho$ is an irreducible representation of $W_F$ on $\GL_m(\CC)$. Let $\Pi_{\psi}(G(F))$ denote the local Arthur packet of $\psi$ and $\Pi_{\phi_{\psi}}^{\ABV}(G(F))$ denote the ABV-packet attached to $\phi_{\psi}$. Then, 
\[\Pi_{\psi}(G(F))=\Pi_{\phi_{\psi}}^{\ABV}(G(F)).\]
\end{theorem}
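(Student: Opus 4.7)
The plan is to reduce the irreducible case to the already-proven simple case (Theorem \ref{ wow main theorem}) via the hyper-unramification machinery of Lemma \ref{unramification lemma}. First I would dispose of the easy inclusion: since $G=\GL_n$ has singleton local Arthur packets by \cite{Arthur:book}, we have $\Pi_\psi(G(F))=\Pi_{\phi_\psi}(G(F))=\{\pi_\psi\}$, and by \cite{CFMMX}*{Theorem 7.19} the simple perverse sheaf $\mathcal{IC}(\1_{C_\psi})$ attached to $\pi_\psi$ satisfies $\Evs_\psi \mathcal{IC}(\1_{C_\psi})\neq 0$, which places $\pi_\psi$ inside $\Pi_{\phi_\psi}^{\ABV}(G(F))$. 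So it suffices to prove the reverse inclusion, equivalently, that $\Pi_{\phi_\psi}^{\ABV}(G(F))$ is a singleton.

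The heart of the argument is to transport the ABV-packet across the isomorphisms $V_{\lambda_{nr}}\simeq V_\lambda$ and $H_{\lambda_{nr}}\simeq H_\lambda$ of Lemma \ref{unramification lemma}. These induce an equivalence of categories $\Perv_{H_\lambda}(V_\lambda)\simeq \Perv_{H_{\lambda_{nr}}}(V_{\lambda_{nr}})$ under which orbits, their closure ordering, and the Zelevinsky involution $C\mapsto \widehat{C}$ all transport, as recorded in the paragraph following Lemma \ref{unramification lemma}. In particular, the orbit $C_\psi$ corresponds to $C_{\psi_{nr}}$ where
\[
\psi_{nr}(w,x,y):=\op{Sym}^d(x)\otimes \op{Sym}^a(y)
\]
is a simple Arthur parameter of $\GL_{n/m}$, and $\widehat{C}_\psi$ corresponds to $\widehat{C}_{\psi_{nr}}=C_{\widehat{\psi_{nr}}}$ by Lemma \ref{hatpsilemma}.

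The main step will be to verify that the functor $\Evs_\psi$ is carried to $\Evs_{\psi_{nr}}$ under this identification, in the sense that $\Evs_\psi\mathcal{IC}(\1_C)\neq 0$ if and only if $\Evs_{\psi_{nr}}\mathcal{IC}(\1_{C_{nr}})\neq 0$. Since the conormal variety $\Lambda_\lambda$, the conormal bundle $\Lambda_{C_\psi}^{\text{reg}}$, the vanishing cycles functor $\RPhi_\xi$, the codimension shift in \eqref{equation:EvsEv}, and the base point $(x_\psi,\xi_\psi)$ of \cite{CFMMX}*{Equations 6.12, 6.13} are all constructed intrinsically from the pair $(V_\lambda,H_\lambda)$ and the Arthur-$\op{SL}_2$ data, they all match under the Schur's lemma identification of Lemma \ref{unramification lemma}. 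Granting this compatibility, the reverse inclusion follows by mimicking the proof of Theorem \ref{ wow main theorem} on the $\GL_{n/m}$ side: any $\pi\in \Pi_{\phi_\psi}^{\ABV}(G(F))$ corresponds to an orbit $C$ for which $\Evs_{\psi_{nr}}\mathcal{IC}(\1_{C_{nr}})\neq 0$ and $\Evs_{\widehat{\psi_{nr}}}\mathcal{IC}(\1_{\widehat{C_{nr}}})\neq 0$; by \cite{KS_sing}*{Proposition 3.2.1} this gives $C_{\psi_{nr}}\leq C_{nr}$ and $\widehat{C}_{\psi_{nr}}\leq \widehat{C}_{nr}$; and Lemma \ref{orbitduallemma} applied to $\psi_{nr}$ forces $C_{nr}=C_{\psi_{nr}}$, hence $C=C_\psi$ and $\pi=\pi_\psi$.

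The principal obstacle will be the compatibility check for $\Evs_\psi$ under hyper-unramification. The orbit bijection and its combinatorial invariants are essentially tautological under Schur's lemma, but one must check that the distinguished point $(x_\psi,\xi_\psi)\in \Lambda_{C_\psi}^{\text{reg}}$ constructed from $\psi$ in \cite{CFMMX}*{Section 6} maps to $(x_{\psi_{nr}},\xi_{\psi_{nr}})$ on the nose, and that the codimension of $C_\psi$ in $V_\lambda$ agrees with that of $C_{\psi_{nr}}$ in $V_{\lambda_{nr}}$ — both of which should fall out of the block-diagonal/Kronecker product form of the Schur identification used in the proof of Lemma \ref{j/h lambda proof}. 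Once this bookkeeping is carried out, the theorem reduces cleanly to Theorem \ref{ wow main theorem}.
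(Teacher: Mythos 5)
Your proposal takes essentially the same route as the paper: reduce to the simple case (Theorem \ref{ wow main theorem}) by transporting everything across the hyper-unramification isomorphisms $V_{\lambda}\simeq V_{\lambda_{nr}}$, $H_{\lambda}\simeq H_{\lambda_{nr}}$ of Lemma \ref{unramification lemma}, using that the orbit bijection $C\mapsto C_{nr}$ preserves closure order and the Zelevinsky involution, and that $\Evs_\psi$ is intrinsic to the geometry of $(V_\lambda,H_\lambda)$. The paper's write-up is slightly more terse — it invokes Theorem \ref{ wow main theorem} directly rather than re-running its proof on the $\GL_{n/m}$ side — but the two arguments are structurally identical, and your flagging of the compatibility check for $(x_\psi,\xi_\psi)$ and the codimension shift is a reasonable elaboration of the paper's remark that $\Evs_\psi$ "is purely dependent on the geometry."
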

\begin{proof}
From Lemma \ref{unramification lemma}, $V_{\lambda}\simeq V_{\lambda_{nr}}$ and $H_{\lambda}\simeq H_{\lambda_{nr}}$. We have a bijection $C \mapsto C_{nr}$ between orbits in $V_{\lambda}$ and $V_{\lambda_{nr}}$ which preserves partial order and the Zelevinsky involution on orbits. This gives a bijection $\pi \mapsto \pi_{nr}$ between $\Pi_{\lambda}(G(F))$ and $\Pi_{\lambda_{nr}}(G(F))$ preserving the partial order and involution on multisegments such that $(C_{\pi})_{nr}=C_{\pi_{nr}}$. In particular, $(C_{\psi})_{nr}=C_{\psi_{nr}}$. Since $\op{Evs}_{\psi}$ is purely dependent on the geometry, we have that $\op{Evs}_{\psi}\mathcal{IC}(\mathbb{1}_{C_{\pi}}) \neq 0$ for $\pi \in \Pi_{\lambda}(G(F))$ if and only if $\op{Evs}_{\psi_{nr}}\mathcal{IC}(\mathbb{1}_{C_{\pi_{nr}}}) \neq 0$ for the corresponding $\pi_{nr} \in \Pi_{\lambda_{nr}}(G(F))$. Since the ABV-packet for $\psi_{nr}$ is a singleton from Theorem \ref{ wow main theorem}, same is true for the ABV packet for $\psi$.
\end{proof}
\begin{Remark}
The process of hyper-unramification taking $\psi \mapsto \psi_{nr}$ gives us a bijection between $\Pi_{\lambda}(G(F))$ and $\Pi_{\lambda_{nr}}(G(F))$ which can be written down explicitly in terms of multisegments. Let $\pi_{\rho}$ be the supercuspidal representation of $\GL_m(F)$ corresponding to the irreducible representation $\rho$. The correspondence at the level of multisegments is induced by taking any segment of the form $[\pi_{\rho}(b), \pi_{\rho}(e)]$ to the segment $[b,e]$. 
\end{Remark}

\begin{bibdiv}
\begin{biblist}

\bib{ABV}{book}{
   author={Adams, Jeffrey},
   author={Barbasch, Dan},
   author={Vogan, David A., Jr.},
   title={The Langlands classification and irreducible characters for real reductive groups},
   series={Progress in Mathematics},
   volume={104},
   publisher={Birkh\"{a}user Boston, Inc., Boston, MA},
   date={1992},
   pages={xii+318},
}

\bib{Arthur:book}{book}{
   author={Arthur, James},
   title={The endoscopic classification of representations},
   series={American Mathematical Society Colloquium Publications},
   volume={61},
   note={Orthogonal and symplectic groups},
   publisher={American Mathematical Society, Providence, RI},
   date={2013},
   pages={xviii+590},
   isbn={978-0-8218-4990-3},
   doi={10.1090/coll/061},
}

\bib{Arthur:unipotent-motivation}{article}{
   author={Arthur, James},
   title={Unipotent automorphic representations: global motivation},
   conference={
      title={Automorphic forms, Shimura varieties, and $L$-functions, Vol.
      I},
      address={Ann Arbor, MI},
      date={1988},
   },
   book={
      series={Perspect. Math.},
      volume={10},
      publisher={Academic Press, Boston, MA},
   },
   date={1990},
   pages={1--75},
   review={\MR{1044818}},
}

\bib{arthur1989unipotent}{article}{
  title={Unipotent automorphic representations: conjectures},
  author={Arthur, James},
  year={1989},
   book={
      series={Ast\'{e}risque},
      volume={},
      publisher={},
   },
}

\bib{Borel:Corvallis}{book}{
   author={Borel, A},
   title={Automorphic L-functions},
   series={Automprhic forms, representations, and $L$-functions Part 2 (Proc. Sympos. Pure Math, Corvallis XXXIII)},
   note={p. 27-61},
   publisher={American Mathematical Society, Providence, RI},
   date={1979},
   pages={27-61}
}


\bib{KS_sing}{article}
    {

  url = {},
  
  author = {Cunningham, Clifton},
  author = {Fiori, Andrew},
  author={Kitt, Nicole},

  title = {Appearance of the Kashiwara-Saito singularity in the representation theory of $p$-adic $\mathop{GL}_{16}$},
  
  publisher = {arXiv},
  journal={Pacific Journal of Mathematics},
  year = {2022},
  note={To appear. Available at \href{https://arxiv.org/abs/2103.04538}{https://arxiv.org/abs/2103.04538}}
}	

\bib{CFMMX}{book}{
   author={Cunningham, Clifton},
   author={Fiori, Andrew},
   author={Moussaoui, Ahmed},
   author={Mracek, James},
   author={Xu, Bin},
   title={Arthur packets for p-adic groups by way of microlocal vanishing cycles of perverse sheaves, with examples},
   series={Memoirs of the American Mathematical Society},
   publisher={AMS},
   volume={276},
   date={2022},
   number={1353},
}

\bib{Harris-Taylor}{book}{
   author={Harris, Michael},
   author={Taylor, Richard},
   title={The geometry and cohomology of some simple Shimura varieties},
   series={Annals of Mathematics Studies},
   volume={151},
   note={With an appendix by Vladimir G. Berkovich},
   publisher={Princeton University Press, Princeton, NJ},
   date={2001},
   pages={viii+276},
   isbn={0-691-09090-4},
}

\bib{Hen}{article}{
  title={Une preuve simple des conjectures de Langlands pour GL(n) sur un corps p-adique},
  author={Henniart, Guy},
  journal={Inventiones mathematicae},
  volume={139},
  number={2},
  pages={439--455},
  year={2000},
  publisher={Springer}
}
\bib{Kal}{book}{
  author={Kaletha, Tasho},
editor={M{\"u}ller, Werner},
editor={ Shin, Sug Woo},
editor={Templier, Nicolas},
title={The Local Langlands Conjectures for Non-quasi-split Groups},
booktitle={Families of Automorphic Forms and the Trace Formula},
year={2016},
publisher={Springer International Publishing},
pages={217--257}
}



\bib{KZ}{article}{
   author={Knight, Harold},
   author={Zelevinsky, Andrei},
   title={Representations of quivers of type $A$ and the multisegment
   duality},
   journal={Adv. Math.},
   volume={117},
   date={1996},
   number={2},
   pages={273--293},
   issn={0001-8708},
}

\bib{knight1996representations}{article}{
  title={Representations of quivers of type A and the multisegment duality},
  author={Knight, Harold},
  author={Zelevinsky, Andrei},
  journal={Advances in mathematics},
  volume={117},
  number={2},
  pages={273--293},
  year={1996},
  publisher={New York: Academic Press, 1965-}
}

\bib{kudla1994local}{article}{
  title={The local Langlands correspondence: the non-archimedean case},
  author={Kudla, Stephen S},
  journal={Motives (Seattle, WA, 1991)},
  volume={55},
  number={Part 2},
  pages={365--391},
  year={1994},
  publisher={American Mathematical Society Providence, RI}
}
\bib{Lusztig:Quivers}{article}{
   author={Lusztig, George},
   title={Quivers, perverse sheaves, and quantized enveloping algebras},
   journal={J. Amer. Math. Soc.},
   volume={4},
   date={1991},
   number={2},
   pages={365--421},
   issn={0894-0347},
}

\bib{Lusztig:classification-unipotent}{article}{
   author={Lusztig, George},
   title={Classification of unipotent representations of simple $p$-adic
   groups},
   journal={Internat. Math. Res. Notices},
   date={1995},
   number={11},
   pages={517--589},
   issn={1073-7928},
}

\bib{MW:involution}{article}{
   author={M\oe glin, Colette},
   author={Waldspurger, Jean-Loup},
   title={Sur l'involution de Zelevinski},
   journal={J. Reine Angew. Math.},
   volume={372},
   date={1986},
   pages={136--177},
   issn={0075-4102},
}

\bib{Pyasetskii}{article}{
	Author = {Pjasecki\u\i , V. S.},
	journal = {Akademija Nauk SSSR. Funkcional\cprime nyi Analiz i ego Prilo\v zenija},
	Issn = {0374-1990},
	Number = {4},
	Pages = {85--86},
	Title = {Linear {L}ie groups that act with a finite number of orbits},
	Volume = {9},
	Year = {1975}
}


\bib{Vogan:Langlands}{article}{
   author={Vogan, David A., Jr.},
   title={The local Langlands conjecture},
   conference={
      title={Representation theory of groups and algebras},
   },
   book={
      series={Contemp. Math.},
      volume={145},
      publisher={Amer. Math. Soc., Providence, RI},
   },
   date={1993},
   pages={305--379},
}



\bib{Z2}{article}{
   author={Zelevinsky, Andrei V.},
   title={Induced representations of reductive ${\germ p}$-adic groups. II.
   On irreducible representations of ${\rm GL}(n)$},
   journal={Ann. Sci. \'{E}cole Norm. Sup. (4)},
   volume={13},
   date={1980},
   number={2},
   pages={165--210},
}

\bib{zelevinskii1981p}{article}{
  title={p-adic analog of the kazhdan-Lusztig hypothesis},
  author={Zelevinskii, Andrei Vladlenovich},
  journal={Functional Analysis and Its Applications},
  volume={15},
  number={2},
  pages={83--92},
  year={1981},
  publisher={Springer}
}

\end{biblist}
\end{bibdiv}

\end{document}